\documentclass[12pt,reqno]{amsart}
\usepackage{amssymb}
\usepackage{amscd}
\usepackage{enumerate}

\setlength{\textwidth}{5.7in}

\def \C{{\mathbb C}}

\def \I{{\mathcal I}}
\def \N{{\mathbb N}}
\def \O{{\mathcal O}}
\def \P{{\mathbb P}}
\def \R{{\mathbb R}}
\def \S{\mathbb{R}/ \mathbb{Z}}
\def \FU{{\frak U}}
\def \FV{{\frak V}}
\def \FW{{\frak W}}
\def \az{\alpha}
\def \bz{\beta}
\def \dz{\delta}
\def \ez{\varepsilon}
\def \fz{\varphi}
\def \Gz{\Gamma}
\def \gz{\gamma}
\def \Lz{\Lambda}
\def \Oz{\Omega}
\def \oz{\omega}
\def \pz{\bar{\partial}}
\def \wz{\infty}
\def \bx{\Box}
\def \st{\subset}

\def \mlist#1.#2.{{#1}_0,\ldots,{#1}_{#2}}
\def \llist#1.#2.{{#1}_1,\ldots,{#1}_{#2}}

\DeclareMathOperator{\Hom}{Hom}
\DeclareMathOperator{\id}{id}
\DeclareMathOperator{\Ker}{Ker}
\DeclareMathOperator{\rank}{rank}
\DeclareMathOperator{\codim}{codim}

\newcommand{\norm}[1]{\left\Vert#1\right\Vert}

\theoremstyle{definition}
\newtheorem*{thmq}{Theorem}
\newtheorem{thm}{Theorem}[section]
\newtheorem{lem}[thm]{Lemma}
\newtheorem{cor}[thm]{Corollary}
\newtheorem{prop}[thm]{Proposition}
\newtheorem{defn}[thm]{Definition}
\numberwithin{equation}{section}

\begin{document}

\title{A splitting theorem for holomorphic Banach bundles}
\author{Jaehong Kim}
\thanks{This research was partially supported by NSF grants DMS0203072 and DMS0700281. I would like to express my best gratitude to Professor L\'aszl\'o Lempert for his suggestions. This paper contains the result of my thesis research under his guidance.
}

\subjclass[2000]{32L05, 32L10, 58B15}

\address{Department of Mathematics, Purdue University, West Lafayette, IN 47907}
\email{jhkim@math.purdue.edu}

\begin{abstract}
This paper is motivated by Grothendieck's splitting theorem. In the 1960s, Gohberg generalized this to a class of Banach bundles. We consider a compact complex manifold $X$ and a holomorphic Banach bundle $E \to X$ that is a compact perturbation of a trivial bundle in a sense recently introduced by Lempert. We prove that $E$ splits into the sum of a finite rank bundle and a trivial bundle, provided $H^{1}(X, \O)=0$.
\end{abstract}

\maketitle


\section{Introduction}
\label{intro}
This paper deals with holomorphic vector bundles over complex manifolds. Let us recall two theorems in the subject that are relevant for this paper. The first is (a special case of) the finiteness theorem of Cartan-Serre \cite{CS} : if $X$ is a compact complex manifold with $\dim X = n$ and $V$ is a holomorphic vector bundle of finite rank over $X$, then
\begin{align}
\label{hodge1}
\dim H^{q}(X,V) < \infty \text{ , for  } 0 \le q \le n \text{.}
\end{align}
The other result is Grothendieck's splitting theorem, useful in many areas of geometry and physics.
\begin{thmq}[Grothendieck \cite{Gr}]
Every finite rank holomorphic vector bundle over the Riemann sphere splits into the sum of line bundles.
\end{thmq}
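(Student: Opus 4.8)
\emph{Proof proposal.} The plan is to induct on the rank $r$ of $E$ over the Riemann sphere $\P^{1}=\P^{1}(\C)$. For $r=0$ there is nothing to prove, and for $r=1$ the claim is that every holomorphic line bundle on $\P^{1}$ is $\O(d)$ for some integer $d$; this follows from the exponential sheaf sequence together with $H^{1}(\P^{1},\O)=H^{2}(\P^{1},\O)=0$. So fix $r\ge 2$ and assume the theorem for bundles of rank $r-1$.

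First I would normalize $E$ by a twist. Write $E(k)=E\otimes\O(k)$. Multiplication by a section of $\O(1)$ vanishing at a point shows that $h^{0}(\P^{1},E(k))$ is nondecreasing in $k$; it vanishes for $k\ll 0$ (embed $E\hookrightarrow\O(N)^{\oplus M}$ for $N\gg 0$ by dualizing a surjection $\O^{\oplus M}\twoheadrightarrow E^{*}(N)$, so $H^{0}(\P^{1},E(k))\hookrightarrow H^{0}(\P^{1},\O(N+k))^{\oplus M}=0$ once $N+k<0$), and it is positive for $k\gg 0$ by Serre's theorem. Hence there is an $m$ with $H^{0}(\P^{1},E(m))\ne 0$ and $H^{0}(\P^{1},E(m-1))=0$; since a twist of $E$ splits iff $E$ does, I may replace $E$ by $E(m)$ and assume $H^{0}(\P^{1},E)\ne 0$ while $H^{0}(\P^{1},E(-1))=0$.

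Now pick $0\ne s\in H^{0}(\P^{1},E)$ and view it as a sheaf map $\O\to E$. If $s$ vanished along an effective divisor $D$ of degree $d\ge 1$, it would lie in the image of $H^{0}(\P^{1},E(-D))\to H^{0}(\P^{1},E)$ with $E(-D)\cong E(-d)$, giving a nonzero section of $E(-d)$ and hence, by the monotonicity above, a nonzero section of $E(-1)$ --- contradiction. So $s$ vanishes nowhere, $\O\xrightarrow{\,s\,}E$ is a subbundle, and $Q:=E/\O$ is a holomorphic vector bundle of rank $r-1$, which by the inductive hypothesis is $\bigoplus_{i=1}^{r-1}\O(a_{i})$. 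Twisting $0\to\O\to E\to Q\to 0$ by $\O(-1)$ and passing to cohomology, the groups $H^{0}(\P^{1},\O(-1))$, $H^{0}(\P^{1},E(-1))$ and $H^{1}(\P^{1},\O(-1))$ all vanish, so $H^{0}(\P^{1},Q(-1))=\bigoplus_{i}H^{0}(\P^{1},\O(a_{i}-1))=0$, forcing $a_{i}\le 0$ for every $i$. The sequence $0\to\O\to E\to Q\to 0$ is classified by an element of $H^{1}(\P^{1},Q^{*})=\bigoplus_{i}H^{1}(\P^{1},\O(-a_{i}))$, and each $-a_{i}\ge 0$ makes this group vanish; hence the sequence splits and $E\cong\O\oplus\bigoplus_{i}\O(a_{i})$. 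Undoing the initial twist completes the induction.

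The step I expect to be the crux is the normalization together with the two vanishings $H^{0}(\P^{1},\O(-1))=H^{1}(\P^{1},\O(-1))=0$ that it feeds into: these are exactly what promote the distinguished section to a genuine \emph{subbundle} and force $a_{i}\le 0$, which in turn kills the extension class. Everything else --- the induction, the line-bundle case, and the cohomology of $\O(d)$ on $\P^{1}$ --- is formal. On a curve of positive genus no comparable normalization is available, consistent with the failure of the splitting theorem there.
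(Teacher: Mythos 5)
Your argument is correct, and it is the standard modern proof of Grothendieck's theorem (essentially the one in Okonek--Schneider--Spindler, or the ``elementary'' proof of Hazewinkel--Martin). There is nothing to compare it against inside the paper: the statement appears there only as a quoted classical result with a citation to Grothendieck's 1957 article, and no proof is given. For the record, every step of your outline checks out: the monotonicity of $h^{0}(E(k))$ via multiplication by a section of $\O(1)$, the vanishing for $k\ll 0$ via an embedding $E\hookrightarrow\O(N)^{\oplus M}$, the saturation step (on a curve the zero scheme of a section is an effective divisor $D$ and the section factors through $E(-D)$, so the normalized $E$ acquires a nowhere-vanishing section and hence a trivial line subbundle with locally free quotient), the bound $a_{i}\le 0$ from $H^{0}(\P^{1},Q(-1))=0$, and the splitting of the extension because $\mathrm{Ext}^{1}(Q,\O)\cong H^{1}(\P^{1},Q^{*})=\bigoplus_{i}H^{1}(\P^{1},\O(-a_{i}))=0$ when all $-a_{i}\ge 0$. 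Your closing remark is also apt in the context of this paper: the role of $H^{1}(\P^{1},\O)=0$ in your normalization is precisely the hypothesis $H^{1}(X,\O)=0$ under which the paper's infinite-rank splitting theorem (Theorem 1.1) operates, and the paper notes that this vanishing is necessary in general.
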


For bundles of infinite rank over finite dimensional, indeed compact manifolds, which are in our focus, certain generalizations of the finiteness and splitting theorems were first proved by Gohberg and Leiterer, see \cite{G}, \cite{GL1}, and \cite{GL2}. In order to explain their results and also our findings, we need to introduce some notions of infinite dimensional analysis. A complex Banach manifold $X$ is a Hausdorff space with an open cover $X=\bigcup U_{i}$, and homeomorphisms $\psi_{i}$ from $U_{i}$ to open sets in a complex Banach space such that all transition mappings $\psi_{i} \psi_{j}^{-1}$ are biholomorphisms where they are defined. We call $(U_{i}, \psi_{i})$ a coordinates system of $X$. Let $X$ be a compact Banach manifold, then $\dim X < \infty$, because balls in infinite dimensional Banach spaces are not precompact. A holomorphic Banach bundle over $X$ is a complex Banach manifold $E$ together with a holomorphic map $\pi : E \to X$ and a vector space structure on each fiber $E_{x} = \pi^{-1}(x)$. It is required that this structure should be locally trivial, i.e. there should exist biholomorphisms $\fz_{i} : E|U_{i} \to U_{i} \times B$, where $U_{i} \subset X$ is an open set and $B$ is a complex Banach space, and $\fz_{i}|E_{x} : E_{x} \to \{x\} \times B$ is a vector space isomorphism, $x \in U_{i}$. As with finite rank bundles, the local trivializations $\fz_{i}$ give rise to functions $\fz_{i}\fz_{j}^{-1} : (U_{i} \cap U_{j}) \times B \to (U_{i} \cap U_{j}) \times B$, which are of the form $\fz_{i}\fz_{j}^{-1}(x,v) = (x, \fz_{ij}(x)v)$. Here $\fz_{ij} : U_{i} \cap U_{j} \to \text{GL}(B)$ is a holomorphic map to the group of invertible linear transformations of $B$. Conversely, given holomorphic transition functions $\fz_{i j} : U_{i} \cap U_{j} \to \text{GL}(B)$ such that $\fz_{i j}\fz_{j k}=\fz_{i k}$, a holomorphic Banach bundle can be defined by gluing. If the transition functions $\fz_{ij}$ are of the form $\fz_{ij}(x) = \id + \text{ compact operator}$, then we say the bundle is of compact type.

\begin{thmq}[Gohberg \cite{G}]
Any holomorphic Banach bundle of compact type over the Riemann sphere splits into a finite sum of line bundles and a trivial Banach bundle.
\end{thmq}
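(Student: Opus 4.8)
The plan is to imitate Grothendieck's own argument, with a finiteness theorem for bundles of compact type over $\P^{1}$ playing the part that \eqref{hodge1} plays in the finite rank case. Passing to the cover of $\P^{1}$ by $U_{0}=\C$ and $U_{1}=\P^{1}\setminus\{0\}$, the bundle $E$ is described by a single transition function $g=\fz_{01}\colon\C^{\ast}\to\text{GL}(B)$, still of the form $g(z)=\id+K(z)$ with $K$ holomorphic and compact-operator valued, and (writing $E_{g}$ for the bundle with this transition function) $E_{g}\cong E_{g'}$ precisely when $g'=\mu_{0}g\mu_{1}^{-1}$ for some $\mu_{i}\in\O(U_{i},\text{GL}(B))$; thus the theorem is equivalent to a Birkhoff factorization $\mu_{0}g\mu_{1}^{-1}=\mathrm{diag}(z^{k_{1}},\dots,z^{k_{m}})\oplus\id_{B'}$ in some splitting $B=\C^{m}\oplus B'$. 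I would, however, work with the cohomology of $E$ on $\P^{1}$ directly, writing $\mathcal F(k)$ for $\mathcal F\otimes\O(k)$.

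\emph{Step 1 (finiteness).} I would first prove that for every compact-type $\mathcal F\to\P^{1}$ one has $\dim H^{0}(\P^{1},\mathcal F(k))<\infty$ for $k\le-1$, $\dim H^{1}(\P^{1},\mathcal F(k))<\infty$ for $k\ge-1$, and $H^{0}(\P^{1},\mathcal F(k))=0$ for $k\ll0$ --- exactly the twists in which a trivial bundle contributes nothing. The tool is finite rank approximation: truncating the Laurent expansion $K=\sum_{n}K_{n}z^{n}$ produces a Laurent polynomial $F=\sum_{|n|\le N}K_{n}z^{n}$ whose values are finite rank operators with range in the fixed finite-dimensional space $V=\sum_{|n|\le N}K_{n}B$, with $g(\id+F)^{-1}$ as close to $\id$ as desired on a neighbourhood of $|z|=1$; the finite rank part is controlled by \eqref{hodge1}, and the near-identity remainder by a Neumann series together with the splitting of Laurent series into their holomorphic parts on the two sides (this is where $H^{1}(\P^{1},\O)=0$ enters). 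One also checks that compact type is stable under twisting, restriction, dualization, and passage to a quotient by a sub-line-bundle, and records the resulting Serre duality $H^{1}(\P^{1},\mathcal F)^{*}\cong H^{0}(\P^{1},\mathcal F^{\vee}(-2))$ where both sides are finite-dimensional.

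\emph{Step 2 (induction).} I would then induct on the finite nonnegative integer $n(E)=\dim H^{0}(\P^{1},E(-1))+\dim H^{0}(\P^{1},E^{\vee}(-1))$. If $H^{0}(E(-1))\neq0$, let $d\ge1$ be the largest integer with $H^{0}(E(-d))\neq0$ (finite, by Step 1). A nonzero section of $E(-d)$ cannot vanish anywhere --- else it would be the image of a section of $E(-d-1)=0$ --- so it determines a sub-line-bundle $\O(d)\hookrightarrow E$ with compact-type quotient $Q$. Twisting $0\to\O(d)\to E\to Q\to0$ by $\O(-1)$ and by $\O(-d-1)$ and passing to cohomology gives $n(Q)<n(E)$ and $H^{0}(Q(-d-1))=0$; by the inductive hypothesis $Q\cong\bigoplus_{j}\O(k_{j})\oplus B_{1}$, and the last vanishing forces every $k_{j}\le d$, so $H^{1}(\P^{1},Q^{\vee}\otimes\O(d))=\bigoplus_{j}H^{1}(\O(d-k_{j}))\oplus H^{1}(\O(d))\otimes B_{1}^{*}=0$; hence the extension splits and $E\cong\O(d)\oplus Q$, of the desired shape. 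If $H^{0}(E(-1))=0$ but $H^{0}(E^{\vee}(-1))\neq0$, run this on $E^{\vee}$ and dualize back. Finally, if $n(E)=0$ then $H^{0}(E(-1))=0$ and, by Serre duality, $H^{1}(E(-1))=H^{0}(E^{\vee}(-1))^{*}=0$, so the sequence $0\to E(-1)\to E\to E_{p}\to0$ exhibits evaluation $H^{0}(\P^{1},E)\to E_{p}$ as an isomorphism for every $p\in\P^{1}$; thus $H^{0}(\P^{1},E)\otimes\O\to E$ is a bundle isomorphism and $E$ is trivial, completing the induction.

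I expect the crux to be Step 1: the finiteness statement, the estimate on the near-identity remainder, and --- unglamorous but essential --- the verification that ``compact type'' really survives the operations $E\mapsto Q$, $E\mapsto E^{\vee}$ and twisting, so that Step 1 and the induced Serre duality apply where Step 2 invokes them. Granting Step 1, Step 2 is a faithful transcription of Grothendieck's proof, with $\dim H^{0}(E(-1))$ replacing the rank as the quantity on which one induces.
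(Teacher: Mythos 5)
Your Step 2 is a faithful transcription of Grothendieck's induction, but Step 1 --- which you correctly identify as the crux --- contains a step that fails for general Banach fibers. You truncate the Laurent expansion $K=\sum_n K_nz^n$ and assert that $F=\sum_{|n|\le N}K_nz^n$ takes values in finite rank operators with range in the finite-dimensional space $V=\sum_{|n|\le N}K_nB$. The coefficients $K_n=\frac{1}{2\pi i}\int K(z)z^{-n-1}\,dz$ are indeed compact (norm limits of Riemann sums of compact operators), but a compact operator on a Banach space need not have finite-dimensional range, and by Enflo's example it need not even be a norm limit of finite rank operators unless $B$ has the (compact) approximation property. So the reduction to ``finite rank part plus near-identity remainder'' is unavailable in the stated generality; this is exactly the extra hypothesis Leiterer had to impose in \cite{Li}, and the machinery of the present paper (Lempert's finiteness theorem, Corollary \ref{finite}) exists precisely to avoid it. A second gap: the base case $n(E)=0$ of your induction rests on Serre duality $H^1(\P^1,E(-1))^*\cong H^0(\P^1,E^\vee(-1))$ for Banach bundles, which you ``record'' but do not prove. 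The direction you need --- that $H^0(E^\vee(-1))=0$ forces $H^1(E(-1))=0$, hence surjectivity of every evaluation map --- is the hard direction of a duality involving a locally convex, a priori non-Hausdorff $H^1$, and nothing in the finite rank theory hands it to you. Note that this base case (``a compact-type bundle with no twisted sections in either direction is trivial'') is where the whole content of the theorem sits, since Grothendieck's induction on rank has no analogue of it.

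For contrast, the route this paper takes to Gohberg's statement (Theorem \ref{thm1} with $X=\P^1$, where $H^1(\P^1,\O)=0$, followed by Grothendieck's theorem applied to the finite rank summand) sidesteps both issues: finiteness of $H^q(X,\I_p^E)$ for $q=0,1$ comes from Lempert's finiteness theorem rather than from finite rank approximation; the evaluation maps $\ez_p:\Gz(X,E)\to E_p$ are then Fredholm (Lemma \ref{mlem1}); a trivial subbundle of finite corank is produced directly (Proposition \ref{mprop1}); and the finite rank complement is split off by killing an obstruction in $H^1(X,G^{*})\otimes B/A$ with $A$ finite dimensional (Proposition \ref{mprop2}), with no appeal to duality. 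If you want to salvage your approach, you would need either to restrict to fibers with the approximation property or to replace Step 1 by the Fredholm/evaluation-map argument --- at which point you have essentially rederived the paper's proof.
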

Our main goal is to extend this result to other manifolds. In fact, following Lempert \cite{Le3}, we shall consider bundles slightly more general than compact type. Let $E$, $F$ be Banach bundles over a complex manifold $X$. We say $E$ is a compact perturbation of $F$ if there exist an open cover $\FU=\{U_{i}\}$ of $X$, homomorphisms $\tilde{\fz}_{i}:E|U_{i} \to F|U_{i}$ for every $U_{i} \in \FU$ which are Fredholm operators on each fiber $E_{x}$, $x \in U_{i}$, and $\tilde{\fz}_{i}-\tilde{\fz}_{j}$ are compact operators on each fiber $E_{x}$, $x \in U_{i} \cap U_{j}$. Every finite rank bundle is a compact perturbation of a trivial Banach bundle and if $E$ is of compact type, then $E$ is a compact perturbation of a trivial bundle. \\

Our main result is

\begin{thm}
\label{thm1}
Let $X$ be a compact complex manifold with $H^{1}(X,\O)=0$, and $E \to X$ a holomorphic Banach bundle. If $E$ is a compact perturbation of a trivial bundle then it splits into the sum of a trivial Banach bundle and a finite rank bundle.
\end{thm}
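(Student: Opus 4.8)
The plan is to promote the local data $\{\tilde{\fz}_i\}$ to a single global Fredholm homomorphism $E\to F$, cut out a finite rank bundle as its kernel, and then split the resulting short exact sequence. Write $F=X\times B$ for the trivial bundle of which $E$ is a compact perturbation, let $\FU=\{U_i\}$ be the associated cover — finite, since $X$ is compact — and $\tilde{\fz}_i\colon E|U_i\to F|U_i$ the given homomorphisms; we may take each $U_i$ biholomorphic to a ball and $X$ connected. First I would reduce to the case where every $\tilde{\fz}_i$ is surjective on fibers and of one constant index $d\ge 0$. After shrinking $\FU$, the fiber cokernels of $\tilde{\fz}_i$ have bounded dimension over $\overline{U_i}$, so one may adjoin to $E$ a trivial finite rank bundle $X\times\C^{m}$ and extend each $\tilde{\fz}_i$ over the new summand by finite rank operators into $F$, chosen to fill the cokernels; this neither enlarges $F$ nor destroys the compact perturbation structure, and the fiberwise index — locally constant and unchanged by compact perturbations — becomes a global constant $d\ge 0$. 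Then $\Ker\tilde{\fz}_i$ is a rank $d$ holomorphic vector bundle over $U_i$.

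The $1$-cochain $c=(\tilde{\fz}_i-\tilde{\fz}_j)$ is a \v{C}ech cocycle valued in the subsheaf $\mathcal A\subset\mathcal{H}om(E,F)$ of homomorphisms that are compact on every fiber. The crucial analytic input is
\[
H^{1}(X,\mathcal A)=0 ,
\]
which I would derive from $H^{1}(X,\O)=0$ by the methods of \cite{Le3}: $\mathcal A$ is a holomorphic Banach bundle modeled on spaces of compact operators, and the point is that — being built from compacts — it is close enough to a trivial Banach bundle for the hypothesis $H^{1}(X,\O)=0$ to force this vanishing. Granting it, $c$ is a coboundary in $\mathcal A$, so after subtracting fiberwise compact corrections from the $\tilde{\fz}_i$ one obtains a single global homomorphism $\hat{\fz}\colon E\to F$, still Fredholm of index $d$ on each fiber; a further finite rank stabilization makes $\hat{\fz}$ fiberwise surjective, and then $V:=\Ker\hat{\fz}$ is a global holomorphic vector bundle of finite rank, sitting in an exact sequence
\[
0\longrightarrow V\longrightarrow E\longrightarrow F\longrightarrow 0
\]
whose last arrow is $\hat{\fz}$.

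Splitting this sequence is the heart of the argument, and it does not split for formal reasons — the obstruction lives in a group which is, roughly, $H^{1}(X,V)\otimes B^{*}$, typically nonzero — so a splitting has to be built by hand. By Cartan--Serre (\ref{hodge1}), $H^{1}(X,V)$ is finite dimensional, hence so is the cokernel of $H^{0}(X,E)\to H^{0}(X,F)$; adjoining one more trivial bundle $X\times\C^{c}$ to $E$, with the new summand mapped by $\hat{\fz}$ isomorphically onto a complement of that cokernel, I may assume $\hat{\fz}\bigl(H^{0}(X,E)\bigr)=H^{0}(X,F)=B$. Since $V$ is a finite rank subbundle, over each small open set $\hat{\fz}$ has a holomorphic right inverse $s_i$, and $s_i-s_j$ is $V$-valued, hence fiberwise finite rank; the vanishing lemma applied again — to the sheaf of fiberwise compact homomorphisms $F\to E$ — glues the $s_i$ to a global $\hat\psi\colon F\to E$ with $\hat\psi-s_i$ fiberwise compact. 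Then $\hat{\fz}\hat\psi-\id_F$ is a global, hence constant, fiberwise compact operator $\theta$, so $\hat{\fz}\hat\psi=\id_B+\theta$ is Fredholm of index $0$. Choose a finite rank $R$ with $\id_B+\theta+R$ invertible and write $R=\sum_k\hat{\fz}(e_k)\otimes\beta_k$ with $e_k\in H^{0}(X,E)$, $\beta_k\in B^{*}$ — possible exactly because $\hat{\fz}$ is surjective on global sections. Then $\hat\psi+\sum_k e_k\otimes\beta_k\colon F\to E$ composes with $\hat{\fz}$ to the invertible operator $\id_B+\theta+R$, and post-composing with its inverse yields a global section of $\hat{\fz}$. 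Hence $E\cong F\oplus V$.

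Finally one must descend from the stabilized bundle back to $E$ — pass from a splitting of $E\oplus(X\times\C^{N})$ to one of $E$. The two places I expect real difficulty are the vanishing lemma of the second paragraph, which is the main new analytic ingredient, and this last cancellation: for holomorphic bundles one cannot in general cancel a trivial finite rank summand, so one must exploit the infinite dimensionality of the fibers and arrange the stabilizations so that the adjoined summand is absorbed into the trivial part or the finite rank part of the splitting of $E\oplus(X\times\C^{N})$. The outcome is $E\cong(X\times B'')\oplus V''$ with $V''$ of finite rank, which is the theorem.
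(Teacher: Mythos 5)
Your strategy --- glue the local Fredholm maps $\tilde{\fz}_i$ into a global fiberwise Fredholm homomorphism $E\to F$ and split off its kernel --- is not the paper's, and it hinges on a claim you do not prove and that the available machinery does not deliver: the vanishing $H^{1}(X,\mathcal A)=0$ for the sheaf $\mathcal A$ of fiberwise compact homomorphisms $E\to F$. The two vanishing/finiteness results actually at hand under $H^{1}(X,\O)=0$ are (i) $H^{1}(X,X\times A)=0$ for genuinely \emph{trivial} Banach bundles (Leiterer; Theorem \ref{thm3} with $V=\O$), and (ii) \emph{finite-dimensionality}, not vanishing, of $H^{1}$ for compact perturbations of trivial bundles (Lempert's theorem). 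The bundle $\mathcal A$, with fibers the compact operators $K(E_x,B)$, is neither: its transition maps are $T\mapsto T\circ\fz_{ji}$ with $\fz_{ji}=\id+(\text{compact})$, and pre-composition with a compact operator is not a compact operator on the Banach space $K(E_x,B)$, so $\mathcal A$ is not in any evident way a compact perturbation of a trivial bundle, and even if it were you would only get $\dim H^{1}<\infty$. Note also that if $E$ were trivial then $\mathcal A$ would be trivial and the vanishing would follow --- which is to say your key lemma is essentially as strong as the theorem you are trying to prove. The same objection applies to the second invocation, for homomorphisms $F\to E$; worse, there the cocycle $(s_i-s_j)$ lives in $H^{1}(X,\Hom(F,V))\cong H^{1}(X,V)\otimes B^{*}$ (Theorem \ref{thm3}), which you yourself observe is typically nonzero, and mapping it into a larger sheaf of compact homomorphisms does not obviously kill it.

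The paper avoids any such vanishing by a different two-step route, and the second step shows how your splitting paragraph should actually go. Step one (Proposition \ref{mprop1}): prove that the evaluation map $\ez_p:\Gz(X,E)\to E_p$ is Fredholm for every $p$. This uses $H^{1}(X,\O)=0$ only to get $H^{1}(X,\I_p)=0$, then blows up at $p$ to identify $H^{q}(X,\I_p^{E})$ with $H^{q}(\tilde X,L\otimes\tilde E)$, which is finite-dimensional by Lempert's finiteness theorem combined with Theorem \ref{thm3}. Fredholmness of all the $\ez_p$ produces a trivial subbundle $T\st E$ of finite corank generated by finitely codimensional spaces of global sections --- no gluing of local homomorphisms is ever needed. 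Step two (Proposition \ref{mprop2}): the extension $0\to T\to E\to G\to 0$ with $G$ of finite rank is split not by a vanishing theorem but by the finiteness built into Theorem \ref{thm3}: the obstruction $\az(h)\in H^{1}(X,\Hom(G,T))\cong H^{1}(X,G^{*})\otimes B$ is a \emph{finite} sum $\sum g_i\otimes b_i$, so it dies after passing to the quotient $B\to B/A$ with $A$ the span of the $b_i$; one splits $E/S$ instead and pulls back. That is the correct replacement for your second vanishing claim, and it also disposes of your final worry about cancelling the stabilizing summand, since no stabilization occurs.
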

It turns out that $H^{1}(X, \O)=0$ is a necessary condition. Section 4 of \cite{Le3} contains an example, due to V\^aj\^aitu, that shows that if $X$ is K\"ahler and $H^{1}(X,\O) \ne 0$, then there is a compact perturbation $F$ of a trivial Banach bundle $T \to X$ with infinite rank such that $H^{0}(X,F) = 0$. Hence $F$ cannot have any trivial subbundle.


\section{Basic Notions}
\label{sec:1}
Let $X$ be a finite dimensional complex manifold, $\C \otimes TX = T^{1,0}X \oplus T^{0,1}X \to X$ the complexified tangent bundle of $X$, and $E \to X$ a holomorphic Banach bundle. An $E$-valued $r$-form $f$ is a map
\[ f : \bigoplus ^{r}(\C \otimes TX) \to E \text{,}\]
whose restriction to any fiber $\bigoplus ^{r}(\C \otimes T_{x}X)$ is a continuous, alternating, and complex $r$-linear map. It is called a $(0,r)$-form if $f(\llist\xi.r.)=0$ whenever $\xi_{j} \in T^{1,0}_{x}X$ for some $1 \le j \le r$. So $0$-forms are global sections. Let $\{(U_{i}, \psi_{i})\}_{i}$ be an atlas of $X$ such that there are trivializations $\fz_{i} : E|U_{i} \to U_{i} \times B$, where $(B, \norm{\phantom{ab}})$ is a Banach space. Fix smooth vector fields $\llist \xi.m.$ on $X$ which span each tangent space $T_{x}X$. Let $\Oz^{r}(X,E)$ be the space of smooth $E$-valued $(0,r)$-forms with the following seminorms, one for each compact $K \st U_{i}$ and $k=0,1,2, \ldots$,
\begin{align}
\label{norm1}
\norm{f}_{K,k}=\sup\norm{\xi_{j_{1}}\cdots\xi_{j_{s}}(\fz_{i} f(\xi_{j_{s+1}}, \ldots,\xi_{j_{s+r}}))(x)} \text{,}
\end{align}
the sup taken over all tuples $\xi_{j_{1}}, \ldots, \xi_{j_{s+r}}$, $0 \le s \le k$, (if $s=0$, set $\xi_{j_{1}} \cdots \xi_{j_{0}} = 1$), and $x \in K \st U_{i}$. Then $\Oz^{r}(X,E)$ is a Fr\'echet space whose topology is independent of the choices made. If $X$ is an open set in $\C^{n}$ and $T=X \times B$ is a trivial Banach bundle, then $f \in \Oz^{r}(X,T)$ can be expressed as
\[f = \sum_{J} f_{J} d\bar{z}^{J} \text{,} \]
where the $f_{J}$ are $B$-valued functions and $|J|=r$. A $\C$-linear operator $\pz_{T,r} : \Oz^{r}(X,T) \to \Oz^{r+1}(X,T)$ is defined by
\[\pz_{T,r} f = \sum_{i,J} \frac{\partial f_{J}}{\partial \bar{z_{i}}} d\bar{z}_{i} \wedge d\bar{z}^{J} \text{.}\]
For a general holomorphic Banach bundle $E \to X$, $\pz = \pz_{E,r} : \Oz^{r}(X,E) \to \Oz^{r+1}(X,E)$ is defined  through local charts and trivializations as above by
\[\pz f|U_{i} = \fz^{-1}_{i}(\psi^{-1}_{i})^{*} \pz_{T,r} \psi^{*}_{i} \fz_{i} (f|U_{i}) \text{.}\]
Since $\pz$ is a continuous operator, $Z_{\pz}^{r}(X,E) = \Ker \pz_{E,r}$ is a Fr\'echet space. We endow the cohomology groups
\[H^{r}_{\pz}(X,E) = Z_{\pz}^{r}(X,E) / \pz \Oz^{r-1}(X,E) \]
with the induced quotient topology. Thus $H^{r}_{\pz}(X,E)$ is a complete locally convex space but not necessarily Hausdorff.


\section{Dolbeault Isomorphism}
\label{sec:2}
In this section, we will recall how the sheaf cohomology groups $H^{q}(X,E)$ can be endowed with a locally convex topology, and show that Dolbeault's isomorphism $H^{q}_{\pz}(X,E) \cong H^{q}(X,E)$ is an isomorphism of locally convex topological vector spaces.\\

Fix smooth vector fields $\llist \xi.m.$ that span each tangent space $T_{x}X$. Let $\fz_{i} : E|U_{i} \to U_{i} \times B$ be a trivialization over $U_{i} \st X$, where $(B, \norm{\phantom{ab}})$ is a Banach space. Further let $\FU$ be a countable cover of $X$ consisting of such open sets. As in \cite{Le3}, we consider the double complex
\[C^{qr}(\FU) = C^{qr}(\FU,E) = \prod_{\mlist U.q. \in \FU}\Oz^{r}(\bigcap_{i=0}^{q}{U_{i},E})\]
with \v Cech coboundary $\dz = \dz^{qr} : C^{qr}(\FU) \to C^{q+1,r}(\FU)$ and componentwise $\pz = \pz^{qr} : C^{qr}(\FU) \to C^{q,r+1}(\FU)$. Set $Z^{qr}(\FU) = Z^{qr}(\FU, E) = \Ker \pz^{qr} \cap \Ker \dz^{qr}$. We endow the \v Cech cohomology groups
\[H^{q}(\FU,E) = Z^{q0}(\FU,E) / \dz (\Ker \pz \cap C^{q-1,0}(\FU,E)) \]
with the induced quotient topology. Also we endow the sheaf cohomology groups $H^{q}(X,E)$ with the topology induced by the direct limit of the system $\{ H^{q}(\FU,E), \gz^{\FU}_{\FV} \}$, where $\gz^{\FU}_{\FV}$ are the refinement homomorphisms.

\begin{thm}
\label{thm2}
Let $X$ be a finite dimensional complex manifold, and $E \to X$ a holomorphic Banach bundle. Then $H^{q}_{\pz}(X,E) \cong H^{q}(X,E)$ as locally convex topological vector spaces.
\end{thm}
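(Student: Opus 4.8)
The plan is to route both $H^{q}_{\pz}(X,E)$ and $H^{q}(X,E)$ through the double complex $C^{\bullet\bullet}(\FU,E)$ introduced above, for a suitably chosen cover $\FU$, and to arrange that every comparison map is a \emph{continuous linear} chain homotopy equivalence; then a topological, not merely algebraic, isomorphism on cohomology comes out for free, since such an equivalence induces on cohomology a continuous linear bijection whose inverse, coming from the homotopy inverse, is again continuous linear. To begin, I would fix a \emph{good} cover $\FU=\{U_{i}\}$ of $X$: countable, locally finite, with $E$ trivial over each $U_{i}$, and such that on every nonempty intersection $U$ of finitely many members of $\FU$ the Dolbeault complex $\Oz^{\bullet}(U,E)$ is exact in positive degrees and is contracted onto $\Ker\bigl(\pz\colon\Oz^{0}(U,E)\to\Oz^{1}(U,E)\bigr)=\O(U,E)$, placed in degree $0$, by a continuous linear homotopy operator. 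By the solvability theory for the $\pz$-equation with Banach-space values developed, for exactly this purpose, in \cite{Le3}, good covers exist and are cofinal among all open covers of $X$; fix one. As $X$ is finite dimensional the rows of $C^{\bullet\bullet}(\FU,E)$ satisfy $0\le r\le\dim X$, so each term $\mathrm{Tot}^{n}=\bigoplus_{q+r=n}C^{qr}(\FU,E)$ of the total complex is a Fr\'echet space (a finite direct sum of countable products of Fr\'echet spaces), and the total differential $D=\dz\pm\pz$ is continuous.

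Now I would contract $C^{\bullet\bullet}(\FU,E)$ in two ways. Vertically, each column $C^{q,\bullet}(\FU,E)$ is a countable product of Dolbeault complexes $\Oz^{\bullet}(U,E)$, one for each intersection $U$ of $q+1$ members of $\FU$, so the product of the homotopy operators from the definition of good contracts it, by a continuous linear homotopy, onto $\Ker\pz\cap C^{q,0}(\FU,E)$ in degree $0$; the standard double-complex argument then upgrades this to a continuous linear chain homotopy equivalence of $\mathrm{Tot}^{\bullet}(C^{\bullet\bullet}(\FU,E))$ with the complex $\bigl(\Ker\pz\cap C^{\bullet,0}(\FU,E),\dz\bigr)$, whose cohomology, with its quotient topology, is $H^{\bullet}(\FU,E)$ by the definitions of this section. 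Horizontally, the sheaf of smooth $E$-valued $(0,r)$-forms is fine, so a partition of unity subordinate to $\FU$ provides the usual continuous linear homotopy operator contracting each row $C^{\bullet,r}(\FU,E)$ onto $\Oz^{r}(X,E)$ in degree $0$, whence, again, a continuous linear chain homotopy equivalence of $\mathrm{Tot}^{\bullet}(C^{\bullet\bullet}(\FU,E))$ with $\bigl(\Oz^{\bullet}(X,E),\pz\bigr)$, whose cohomology is $H^{\bullet}_{\pz}(X,E)$. Passing to cohomology and composing gives $H^{q}_{\pz}(X,E)\cong H^{q}(\FU,E)$ as locally convex spaces; concretely, this isomorphism and its inverse are the familiar \v Cech--Dolbeault zig-zag maps, assembled from the $\pz$-solution operators and from multiplication by the partition of unity, respectively. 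Finally, to reach $H^{q}(X,E)=\varinjlim_{\FV}H^{q}(\FV,E)$: the refinement homomorphisms are induced by restriction and re-indexing of cochains, hence continuous, and a routine check shows they are compatible with the identifications just made whenever both covers are good; since good covers are cofinal, the direct limit is therefore taken, cofinally, over a subsystem all of whose maps are topological isomorphisms, and so is canonically isomorphic as a topological vector space to $H^{q}(\FU,E)$, and hence to $H^{q}_{\pz}(X,E)$.

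The step I expect to be the real obstacle is the very first one: producing a cofinal family of covers all of whose finite intersections carry a \emph{continuous linear} solution operator for the $E$-valued $\pz$-equation. That solutions exist at all is the Banach-valued Dolbeault--Grothendieck lemma; making the choice of solution continuous and linear --- which is precisely what forces the zig-zag maps, and hence the isomorphisms, to be topological rather than merely algebraic --- rests on the finer structure of the Fr\'echet spaces of forms (nuclearity, the splitting of the scalar $\pz$-sequence over Stein sets, tensoring with the fiber), and is where the analytic work of the section lies. Everything downstream of it is homological bookkeeping with continuous linear maps. (If one prefers, this input can be weakened to mere \emph{algebraic} exactness of the rows and columns: the inclusions of the two edge complexes into $\mathrm{Tot}^{\bullet}(C^{\bullet\bullet}(\FU,E))$ are then continuous and, by a diagram chase, bijective on cohomology, and since every cohomology space occurring here is a quotient of a Fr\'echet space and hence at once webbed and ultrabornological, any continuous linear bijection between two of them is automatically a topological isomorphism by the closed graph theorem.)
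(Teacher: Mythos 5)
Your overall architecture --- comparing $H^{q}_{\pz}(X,E)$ and $H^{q}(\FU,E)$ through the double complex $C^{\bullet\bullet}(\FU,E)$ while keeping every comparison map continuous and linear --- is exactly the skeleton of the paper's proof. But the analytic input you feed into it is not available, and this is a genuine gap, not merely the ``expected obstacle'' you flag. You require a cofinal family of covers on each of whose finite intersections $U$ the full Fr\'echet Dolbeault complex $\Oz^{\bullet}(U,E)$ of \emph{smooth} $E$-valued forms is contracted by \emph{continuous linear} homotopy operators, and you attribute this to \cite{Le3}. That is not what \cite{Le3} provides: Lempert's operator $S$ is a sup-norm-bounded solution operator acting only on the Banach spaces $C^{qr}_{b}(\FV)$ of \emph{bounded} forms over a \emph{special} cover (strongly pseudoconvex pieces in general position), where it comes from integral kernels; no continuous linear $\pz$-homotopy on $\Oz^{r}(U,E)$ is constructed for the intersections of a general Stein cover. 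The nuclearity route you sketch reduces the problem, via $\Oz^{r}(U,B)\cong\Oz^{r}(U)\hat{\otimes}B$, to a continuous linear splitting of the \emph{scalar} Dolbeault complex on every finite intersection; such splittings are governed by the Vogt (DN)--($\Omega$) theory, are not automatic for arbitrary Stein open sets, and cannot simply be arranged simultaneously on all finite intersections of a cofinal family of covers. The paper's proof is deliberately asymmetric precisely to dodge this: the map $L$ from \v Cech to Dolbeault uses only the \v Cech homotopy $R$ (built from a partition of unity, harmless on the Fr\'echet complexes), while the continuous inverse $\Lz$ first passes to the bounded subcomplex $Z^{0q}_{b}(\FV)$ by restriction, applies $(\dz S)^{q}$ there, and returns to $\FU$ by a refinement map that is continuous by Cauchy estimates; the comparison operators $\ez^{qr}$ with properties (a)--(e) are what make these two one-sided constructions compose to the identity on cohomology.

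Your parenthetical fallback does not rescue the argument. Even granting mere algebraic exactness of rows and columns (the Banach-valued Dolbeault lemma on the intersections --- itself nontrivial and essentially what $S$ encodes), the closed graph and open mapping theorems for webbed and ultrabornological spaces require Hausdorff spaces, whereas the cohomology groups here are quotients by possibly non-closed subspaces and the paper explicitly allows them to be non-Hausdorff; the explicit continuous homotopy inverse $\Lz$ is what makes the paper's argument immune to this. Finally, the identification of $H^{q}(\FU,E)$ with the direct limit $H^{q}(X,E)$, which you leave as a ``routine check,'' is the content of \cite[Corollary 2.6]{Le3} and again goes through the bounded complex over special covers rather than through refinements of ``good'' covers.
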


For the proof we have to introduce a subcomplex of $C^{qr}(\FU)$. With $U_{i}$, $\fz_{i}$, $\llist \xi. m.$ as above, and $W \st \st U_{i}$ an open subset, consider the space of bounded $E$-valued $(0,r)$-forms $\Oz^{r}_{b}(W,E) \st \Oz^{r}(W,E)$ with the following norm
\[\norm{f}_{b}=\sup\norm{\fz_{i}f(\xi_{j_{1}}, \ldots, \xi_{j_{r}})(x)} \text{,}\]
the sup taken over all tuples $\xi_{j_{1}}, \ldots, \xi_{j_{r}}$ and $x \in W$. Given a cover $\FW$ consisting of such open sets, we define a double complex
\[C^{qr}_{b}(\FW) = C^{qr}_{b}(\FW,E) = \prod_{\mlist W.q. \in \FW}\Oz^{r}_{b}(\bigcap_{i=0}^{q}{W_{i},E})\]
with $\dz$ and $\pz$ as before. Set $Z^{qr}_{b}(\FW) = Z^{qr}_{b}(\FW,E) = Z^{qr}(\FW,E) \cap C^{qr}_{b}(\FW,E)$ and endow the cohomology groups
\[H^{q}_{b}(\FW,E) = Z^{q0}_{b}(\FW,E) / \dz (\Ker \pz \cap C^{q-1,0}_{b}(\FW,E)) \]
with the induced quotient topology. \\

The following definition comes from \cite{Le3}:
\begin{defn}
We say that a countable open cover $\FW$ of $X$ is special (with respect to $E$) if (a) for every $W \in \FW$ there is a biholomorphism of a neighborhood of $\overline{W}$ into some $\C^{n}$ that maps $W$ on a bounded, strongly pseudoconvex domain with smooth boundary; also $E$ is trivial on this neighborhood, and (b) the boundaries of $W \in \FW$ are in general position. This latter means that if $k \in \N$ and $\rho_{i}$ are smooth defining functions of $W_{i} \in \FW$, $i=1,\dots,k$, then $\llist d\rho. k.$ are linearly independent at each point of the set $\{\rho_{1}=\dots=\rho_{k}=0\}$.
\end{defn}
From Sard's theorem, it follows that there are arbitrarily fine spacial covers. \\

Suppose $\FV$ is a special cover of $X$, and $\FU$ is a countable refinement of $\FV$ consisting of Stein open sets. A refinement map $\gz : \FU \to \FV$ induces a linear map $Z^{qr}_{b}(\FV) \to Z^{qr}(\FU)$ by $g =(g_{V})_{V} \mapsto g | \FU = (g_{\gz(U)}|U)_{U}$. In \cite[Lemma 2.5]{Le3}, Lempert constructed homomorphisms (continuous linear maps),
\begin{align}
\ez = \ez^{qr} &: Z^{qr}(\FU) \to Z^{qr}_{b}(\FV) \text{, for } q, r \ge 0 \text{,}\nonumber \\
R=R^{qr} &: C^{qr}(\FU) \to C^{q-1,r}(\FU) \text{, for } q \ge 1 \text{ and } r \ge 0 \text{,}\nonumber \\
S=S^{qr} &: C^{qr}_{b}(\FV) \to C^{q,r-1}_{b}(\FV) \text{, for } q \ge 0 \text{ and } r \ge 1 \nonumber
\end{align}
with the following properties: \\
\begin{enumerate}[(a)]
\item if $f \in Z^{0r}(\FU)$, then $\ez f |\FU = f$;
\item if $q \ge 1$, then $\ez^{qr} = \dz S^{q-1,r+1} \ez^{q-1,r+1} \pz R^{qr}$;
\item if $q \ge 1$ and $f  \in Z^{qr}(\FU)$, then there exists $h \in C^{q-1,r}(\FU)$ such that $\pz h =0$ and $\ez f |\FU - f = \dz h$;
\item if $q \ge 1$, then $\dz R f + R \dz f = f$ on $C^{qr}(\FU)$;
\item if $r \ge 1$, then $\pz S f + S \pz f = f$ on $C^{qr}_{b}(\FV)$.
\end{enumerate}
In fact, Lempert claims (d) and (e) only for $f \in Z^{qr}(\FU)$ and $Z^{qr}_{b}(\FV)$ respectively. But from his definition of $R$ and $S$, (d) and (e) hold as stated here.

\begin{prop}
\label{prop2-1}
Let $\FV$ be a special cover of $X$ and $\FU$ a countable refinement of $\FV$ consisting of Stein open sets. Then the composition
\begin{align}
\label{Lhomo}
\begin{CD}
L : Z^{q0}(\FU) @> (\pz R)^{q} >> Z^{0q}(\FU) @> \cong >> Z^{q}_{\pz}(X,E)
\end{CD}
\end{align}
induces a topological isomorphism $\tilde{L} : H^{q}(\FU,E) \to H^{q}_{\pz}(X,E)$. Here $(\pz R)^{q} = \pz R^{1,q-1} \cdots \pz R^{q0}$, and the isomorphism in \eqref{Lhomo} is the inverse of the isomorphism $f \mapsto (f|U)_{U \in \FU}$.
\end{prop}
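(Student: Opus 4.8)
The plan is to produce an explicit candidate inverse built out of Lempert's operators $\ez,R,S$ and to verify, by formal manipulations with the relations (a)--(e), that it is a two-sided inverse of $\tilde L$ at the level of cohomology. Since $\pz$, $\dz$, the restriction map $g\mapsto g|\FU$ and the operators $\ez,R,S$ are all continuous, this will at the same time show that $\tilde L$ is a \emph{topological} isomorphism; exhibiting an explicit inverse is essential here, because $H^{q}(\FU,E)$ and $H^{q}_{\pz}(X,E)$ need not be Hausdorff and no open-mapping argument is available. I may assume $q\ge 1$, the case $q=0$ being immediate (then $H^{0}(\FU,E)=Z^{00}(\FU,E)$ is the space of global holomorphic sections and $L$ is just the gluing isomorphism).

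First I would check that $L$ is well defined, continuous, and descends to the quotients. That $(\pz R)^{q}$ carries $Z^{q0}(\FU)$ into $Z^{0q}(\FU)$ follows by induction: if $f\in Z^{qr}(\FU)$, so $\dz f=\pz f=0$, then (d) gives $\dz R^{qr}f=f$, whence $\pz R^{qr}f$ is killed by $\pz$ (as $\pz^{2}=0$) and by $\dz$ (since $\dz\pz R^{qr}f=\pm\pz\dz R^{qr}f=\pm\pz f=0$), i.e.\ $\pz R^{qr}f\in Z^{q-1,r+1}(\FU)$; after $q$ steps one arrives in $Z^{0q}(\FU)$, whose elements glue to global $\pz$-closed forms, which is the second arrow of \eqref{Lhomo}; continuity of $L$ is clear. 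That $L$ sends $\dz(\Ker\pz\cap C^{q-1,0}(\FU))$ into $\pz\Oz^{q-1}(X,E)$ is a telescoping: beginning with $\dz g$, $\pz g=0$, and applying (d) at \v Cech degrees $q-1,q-2,\dots,1$ one strips the factors $R$ off one at a time, each step turning the current $\pz$-closed \v Cech chain into $\pz R$ of the next such chain up to a $\dz$-exact error, until at \v Cech degree $0$---where (d) fails---one is left with $R^{1,q-1}$ of a $\dz$-exact chain, which by (d) agrees, up to a $\pz$-closed chain, with one that glues to a global $(0,q-1)$-form; applying $\pz$ yields a global $\pz$-coboundary. Hence $\tilde L$ exists and is continuous.

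Next I would build the inverse. On $Z^{0q}(\FU)\cong Z^{q}_{\pz}(X,E)$ set $g_{0}=\ez^{0q}f$, $g_{j}=\dz S^{j-1,q-j+1}g_{j-1}$ for $j=1,\dots,q$, and $L'f=g_{q}|\FU$; by (e), $\pz S^{j-1,q-j+1}g_{j-1}=g_{j-1}$ because $g_{j-1}$ is $\pz$-closed, so each $g_{j}$ is $\pz$- and $\dz$-closed, $g_{j}\in Z^{j,q-j}_{b}(\FV)$, and $L'f\in Z^{q0}(\FU)$. To see that $L'$ descends: if $f=\pz\bar c$ with $\bar c\in\Oz^{q-1}(X,E)$ then $g_{0}=\pz\bar c|\FV$, and an induction using (e) to extract $\pz$-primitives---legitimate as long as the form degree is $\ge 1$---exhibits each $g_{j}$ as $\dz$ of a $\pz$-closed chain in $C^{j-1,q-j}_{b}(\FV)$; in particular $g_{q}$ is $\dz$ of a holomorphic chain, so $g_{q}|\FU\in\dz(\Ker\pz\cap C^{q-1,0}(\FU))$ and $\tilde L'\colon H^{q}_{\pz}(X,E)\to H^{q}(\FU,E)$ is defined and continuous. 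Finally the two composites are identities. Iterating (b) gives, for $f\in Z^{q0}(\FU)$, $\ez^{q0}f=\dz S^{q-1,1}\ez^{q-1,1}\pz R^{q0}f=\dots=\dz S^{q-1,1}\cdots\dz S^{0q}\ez^{0q}\big((\pz R)^{q}f\big)$, which is exactly the zig-zag defining $L'$ applied to $(\pz R)^{q}f$; hence $L'(Lf)=\ez^{q0}f|\FU$, and (c) gives $\ez^{q0}f|\FU-f=\dz h$ with $\pz h=0$, so $\tilde L'\tilde L=\id$. In the other direction, running $(\pz R)^{q}$ through $g_{q}|\FU$---using (d) to strip the $R$'s and (e) in the form $\pz S^{j-1,q-j+1}g_{j-1}=g_{j-1}$---one finds $(\pz R)^{j}(g_{q}|\FU)=g_{q-j}|\FU$ modulo $\dz$ of a $\pz$-closed chain for $j<q$, while the last step (\v Cech degree $0$) produces, by (a), $g_{0}|\FU=\ez^{0q}f|\FU=f$ together with $\pz$ of a global $(q-1)$-form; hence $L(L'f)\equiv f$ modulo $\pz\Oz^{q-1}(X,E)$ and $\tilde L\tilde L'=\id$. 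Thus $\tilde L$ is a topological isomorphism with inverse $\tilde L'$.

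The main obstacle is not a conceptual one---this is the double-complex / spectral-sequence proof of the Dolbeault isomorphism rendered explicit and continuous through Lempert's operators---but the volume of careful bookkeeping it requires: one must keep the $\FU$-complex, the $\FV$-complex and the refinement/restriction map between them rigorously apart, track \v Cech and form degrees through each telescoping, and in particular note that (d) holds only in positive \v Cech degree, its failure at \v Cech degree $0$ being precisely what forces the genuinely global $\pz$-primitives to appear and what makes the composites land in the correct coboundary subgroups instead of being equalities on the nose. Once the relations (a)--(e) are granted there is no remaining analytic input and, because every operator involved is continuous, no residual topological subtlety.
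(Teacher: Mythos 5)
Your proof is correct and follows essentially the same route as the paper: your candidate inverse $L'$ is exactly the paper's continuous map $\Lz=(\dz S)^{q}\circ\ez^{0q}$ (composed with restriction to $\FU$), and the identification $\Lz L=\ez^{q0}(\cdot)|\FU$ via property (b), together with (c), is the paper's central computation. The only difference is that the paper skips your direct verification of $\tilde L\tilde L'=\id$ by noting that $\tilde L$ is already a vector-space bijection from the standard diagram chase (i.e.\ from the existence of $R$ and $S$), so the one-sided identity $\tilde\Lz\tilde L=\id$ on cohomology already forces $\tilde L^{-1}=\tilde\Lz$ to be continuous.
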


\begin{proof}
That $H^{q}(\FU,E)$ and $H^{q}_{\pz}(X,E)$ are isomorphic as vector spaces, of course follows from the local solvability of $\pz$ and from the softness of the sheaves $\Oz^{q}(X,E)$, i.e. from the existence of the operators $S$ and $R$. In fact, the vector space isomorphism obtained by diagram chasing will be our $\tilde{L}$. Since $L$ is continuous, so will be $\tilde{L}$, and we are left with showing $\tilde{L}^{-1}$ is also continuous. Consider the composition
\begin{align}
\label{Lzhomo}
\begin{CD}
\Lz : Z^{q}_{\pz}(X,E) \to Z^{0q}_{b}(\FV) @> (\dz S)^{q} >> Z^{q0}_{b}(\FV) \to Z^{q0}(\FU) \text{,}
\end{CD}
\end{align}
where the first map is $f \mapsto (f|V)_{V \in \FV}$ and $(\dz S)^{q} = \dz S^{q-1,1} \cdots \dz S^{0q}$; the last map is the refinement homomorphism induced by a refinement map $\FU \to \FV$, continuous by Cauchy estimates. Thus $\Lz$ itself is continuous.\\
\indent We compute $\Lz L$. Observe that the composition of the last map in \eqref{Lhomo} with the first map in \eqref{Lzhomo} is
\[\ez^{0q} : Z^{0q}(\FU) \to Z^{q}_{\pz}(X,E) \to Z^{0q}_{b}(\FV) \text{.} \]
Since $(\dz S)^{q}\ez^{0q}(\pz R)^{q} = \ez^{q0}$ by (b) above, we obtain $\Lz L$ as the composition
\[
\begin{CD}
\Lz L : Z^{q0}(\FU) @> \ez^{q0} >> Z^{q0}_{b}(\FV) \to Z^{q0}(\FU) \text{.}
\end{CD}
\]
In view of (c), therefore $\Lz L$ induces the identity on $H^{q}(\FU)$, and so $\tilde{L}^{-1}$ is continuous.
\end{proof}

\begin{proof}[Proof of Theorem \ref{thm2}]
By \cite[Corollary 2.6]{Le3}, if $\FW$ is a special cover of $X$ and $\FU$ is a Stein open cover of $X$ which is finer than $\FW$, then
\begin{align}
\label{lempert2}
H^{q}(\FU, E) \cong H^{q}_{b}(\FW, E) \cong H^{q}(X, E) \text{.}
\end{align}
By Proposition \ref{prop2-1} and \eqref{lempert2}, therefore $H^{q}_{\pz}(X,E) \cong H^{q}(X,E)$
as locally convex topological vector spaces.
\end{proof}


\section{Cohomology of Tensor Products}
\label{sec:3}
In this section, first we will recall the notion of tensor products. Then we consider a compact complex manifold $X$, a holomorphic Banach bundle $V \to X$ of finite rank, and relate the cohomology groups of $V$ to those of $V$ tensorized with a trivial Banach bundle. This we do by extending Hodge's decomposition theorem to a certain type of Banach bundles.\\

If $(A, \norm{\phantom{ab}}_{A})$ and $(B, \norm{\phantom{ab}}_{B})$ are Banach spaces, then there are many ways to give a topology to the tensor product $A \otimes B$. But if $\dim A = m < \wz$, there is no ambiguity, and in fact $A \otimes B \cong B \oplus \cdots \oplus B$, $m$ times. Concretely, if $A$ is a Banach space with a basis $\{a_{1}, \ldots, a_{m} \}$, then a norm on $A \otimes B$ can be given as
\[\norm{\sum_{i=1}^{m}a_{i}\otimes b_{i}}_{A\otimes B} = \max_{i} \norm{a_{i}}_{A} \norm{b_{i}}_{B} \text{, } b_{i} \in B \text{.}\]
If  $V \to X$ is a finite rank bundle and $E \to X$ is a Banach bundle, then the tensor product bundle $\pi : V \otimes E \to X$, whose fiber is $\pi^{-1}(x) = V_{x} \otimes E_{x}$, is well defined. If $T = X \times B \to X$ is a trivial Banach bundle, then we denote the tensor product bundle $V \otimes T \to X$ by $V \otimes B$. There is a canonical embedding of vector spaces
\begin{align}
\label{emb1}
\begin{split}
\frak{i} : \Oz^{q}(X,V)\otimes B &\to \Oz^{q}(X,V\otimes B) \\
\sum {f_{j} \otimes b_{j}} &\mapsto ((\xi_{1}, \ldots, \xi_{q}) \mapsto \sum {f_{j}(\xi_{1}, \ldots, \xi_{q}) \otimes b_{j}}) \text{,}
\end{split}
\end{align}
where $f_{j} \in \Oz^{q}(X,V)$, $b_{j} \in B$, and $\xi_{i} \in TX$.

\begin{lem}
\label{emb2}
$\frak{i}$ in \eqref{emb1} is injective and has dense range. For any finite dimensional subspace $F \st \Oz^{q}(X,V)$, the restriction $\frak{i}|F \otimes B$ is continuous and $\frak{i}(F \otimes B) \st \Oz^{q}(X,V \otimes B)$ is closed.
\end{lem}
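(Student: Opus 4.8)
The plan is to work locally and then patch. First I would establish injectivity: if $\frak{i}(\sum f_j \otimes b_j) = 0$ with the $b_j$ linearly independent in $B$, then evaluating at any tuple $(\xi_1,\dots,\xi_q)$ and any point $x$ gives $\sum f_j(\xi_1,\dots,\xi_q)(x) \otimes b_j = 0$ in $V_x \otimes B$; since $\dim V_x < \infty$ and the $b_j$ are independent, one sees (e.g. by pairing with functionals on $B$) that each $f_j(\xi_1,\dots,\xi_q)(x) = 0$, hence every $f_j = 0$. This is routine. For the density statement, the key observation is that a general element $g \in \Oz^q(X, V \otimes B)$ can be approximated in each seminorm \eqref{norm1} by elements of the range. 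Locally, after trivializing $V$ over a chart so that $V \otimes B$ looks like $U \times B^r$ (using the finite-dimensionality of $V$), $g$ is a finite tuple of $B$-valued $(0,q)$-forms; each such form $\sum_J g_J \, d\bar z^J$ with $g_J : U \to B$ holomorphic in the needed sense can be approximated: $g_J$ takes values in a separable subspace, and one approximates $g_J$ uniformly on compacta (together with derivatives) by functions of the form $\sum_k \varphi_k(z) b_k$ with $\varphi_k$ scalar and $b_k \in B$ — for instance by composing with finite-rank projections coming from a Schauder-type or partition-of-unity argument in $B$, or more simply by a Riemann-sum / convolution approximation that is valued in finitely many directions. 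Patching these local approximations with a partition of unity subordinate to the cover gives an element of $\frak{i}(\Oz^q(X,V) \otimes B)$ close to $g$; here one must check that multiplying by smooth cutoff functions keeps us in the range of $\frak{i}$, which it does since $\frak{i}$ is a $\Oz(X)$-module map.

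For the second half of the lemma, fix a finite dimensional subspace $F \subset \Oz^q(X, V)$ and choose a basis $\{f_1, \dots, f_N\}$ of $F$. The continuity of $\frak{i}|_{F \otimes B}$: an element of $F \otimes B$ is $\sum_{l=1}^N f_l \otimes b_l$, and its image evaluated on a tuple at $x$, after trivializing, is $\sum_l f_l(\dots)(x) \otimes b_l \in V_x \otimes B$. Because the norm on the finite-dimensional tensor factor is equivalent to any product norm, each seminorm \eqref{norm1} of the image is bounded by $C \sum_l \norm{b_l}_B \, \norm{f_l}_{K,k}$ for a constant $C$ depending only on $F$ and the chart data; since the $f_l$ are fixed this is a continuous function of $(b_1, \dots, b_N)$, giving continuity. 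For closedness of $\frak{i}(F \otimes B)$: suppose $\frak{i}(\sum_l f_l \otimes b_l^{(\nu)}) \to g$ in $\Oz^q(X, V \otimes B)$. One wants to show the coefficient vectors $(b_1^{(\nu)}, \dots, b_N^{(\nu)})$ converge in $B^N$. Pick a point $x_0$ and a tuple $(\xi_1, \dots, \xi_q)$ at which the vectors $f_1(\dots)(x_0), \dots$ span an $N'$-dimensional space — more carefully, choose finitely many evaluation points/tuples so that the resulting linear map from $F$ into a finite-dimensional space is injective (possible since $F$ is finite dimensional and its elements are determined by enough jets). Pairing $g$ and the approximants with a dual basis of linear functionals on that finite-dimensional space, and using that the linear system relating the $b_l$'s to these evaluations is invertible, one recovers each $b_l^{(\nu)}$ as a fixed continuous linear combination (with values in $B$) of the evaluations of $\frak{i}(\cdots)$, which converge in $B$ by the convergence of $g$ in the sup-seminorms; hence $b_l^{(\nu)} \to b_l$ in $B$ and by continuity of $\frak{i}|_{F \otimes B}$ the limit $g = \frak{i}(\sum_l f_l \otimes b_l)$ lies in the range.

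The main obstacle I anticipate is the closedness assertion, specifically arranging the finitely many evaluation functionals so that the map $F \to (\text{finite-dimensional space})$ is injective and extracting from it a \emph{bounded} left inverse that recovers the $B$-valued coefficients; one must be careful that the chosen evaluations, when applied to the full $\Oz^q(X, V \otimes B)$, are continuous for the Fréchet topology (they are, being special cases of the seminorms \eqref{norm1}) and that the linear algebra is done over the finite-dimensional factor only, so that no completed tensor product subtleties enter. The density part is conceptually easy but requires choosing a concrete approximation scheme in the Banach space $B$ that is compatible with holomorphy and with taking $\bar\partial$-derivatives; using that $\Oz^q(X, V\otimes B)$ is built from local models $U \times B$ and that polynomial/Riemann-sum approximations of $B$-valued holomorphic maps converge in all the derivative seminorms handles this cleanly.
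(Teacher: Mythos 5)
Your argument for injectivity, for the continuity of $\frak{i}|F\otimes B$, and for the closedness of $\frak{i}(F\otimes B)$ --- recovering each coefficient $b_l$ from finitely many evaluations (point, tangent tuple, covector in $V_x^*$) chosen so that they are injective on the finite-dimensional $F$ and continuous in the seminorms \eqref{norm1} --- is essentially the paper's own proof, which organizes those evaluations into a triangular system to get the estimate $\norm{b_i}_B\le c_i\norm{f}_0$ and deduces both injectivity and closedness at once. The only real divergence is density, which the paper simply quotes from Leiterer's Proposition 2.1 while you sketch the underlying convolution/Riemann-sum approximation; that sketch is sound provided you drop the appeal to Schauder-type projections (a general Banach space has no such basis) and note that the local coefficient functions are merely smooth, not holomorphic.
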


\begin{proof}
Density follows from \cite[Proposition 2.1]{Li}, and continuity of $\frak{i}|F \otimes B$ is straightforward. As to injectivity, clearly it suffices to show that  $\frak{i}|F \otimes B$ is injective for any finite dimensional subspace $F \st \Oz^{q}(X,V)$. We shall verify this now, and at the same time show $\frak{i}(F \otimes B) \st \Oz^{q}(X,V \otimes B)$ is closed. Let $V^{*}$ be the dual bundle of $V$. Successively construct $f_{i} \in F$, $x_{i} \in X$, $\oz_{i} \in V^{*}_{x_{i}}$, and $\xi_{i1}, \ldots, \xi_{iq} \in T_{x_{i}}X$ for $i = 1, 2, \ldots, \text{dim} F$, so that
\[
\oz_{i} f_{j}(\xi_{i1}, \ldots, \xi_{iq})=
\left\{
\begin{array}{lr}
1 &\text{, if } i = j \phantom{.}\\
0 &\text{, if } i < j \text{.}
\end{array}
\right.
\]
Thus the $f_{i}$ form a basis of $F$. We claim that there are constants $c_{i}$ such that if
\begin{align}
\label{finiteclosed1}
f = \frak{i} \sum_{i=1}^{\text{dim} F}f_{i} \otimes b_{i} \text{, } b_{i} \in B \text{,}
\end{align}
then
\begin{align}
\label{finiteclosed2}
\norm{b_{i}}_{B} \le c_{i} \norm{f}_{0} \text{.}
\end{align}
This follows by induction on $i$, upon substituting $\xi_{i1}, \ldots, \xi_{iq}$ in \eqref{finiteclosed1} and applying $\oz_{i}$ for $i=1, 2, \ldots$. Thus we see $\frak{i}$ is injective. Next suppose that $b_{i}^{k} \in B$ are such that
\[ \lim_{k \to \infty} \frak{i} \sum_{i}f_{i} \otimes b_{i}^{k} = g \in \Oz^{q}(X,V \otimes B) \text{.}\]
Then \eqref{finiteclosed1}, \eqref{finiteclosed2} show that for each $i$ the $b_{i}^{k}$ form a Cauchy sequence. If $\lim_{k \to \infty}b_{i}^{k} = b_{i}$, then clearly $g = \frak{i} \sum f_{i} \otimes b_{i} \in \frak{i}(F \otimes B)$; that is, $\frak{i}(F \otimes B)$ is indeed closed.
\end{proof}
Henceforward we shall treat $\Oz^{q}(X,V)\otimes B$ as a subspace of $\Oz^{q}(X,V\otimes B)$. The main result of this section is the following.

\begin{thm}
\label{thm3}
Let $X$ be a compact complex manifold, $V \to X$ a holomorphic Banach bundle of finite rank, and $B$ a Banach space. Then the embedding $\frak{i}$ in \eqref{emb1} induces an isomorphism
\[
\begin{CD}
H^{q}(X, V) \otimes B @> \cong >> H^{q}(X,V \otimes B)
\end{CD}
\]
of locally convex topological vector spaces.
\end{thm}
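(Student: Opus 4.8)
The plan is to use the Dolbeault description of both sides, given by Theorem~\ref{thm2}, and to realize $H^q(X, V\otimes B)$ as the cohomology of a complex of Banach spaces to which Hodge theory applies. First I would fix a Hermitian metric on $X$ and on the finite rank bundle $V$, and a special cover $\FV$ together with a Stein refinement $\FU$. Using the operators $\ez, R, S$ recalled above, the Dolbeault cohomology $H^q_\pz(X,V)$ is computed, up to topological isomorphism, by a finite-dimensional-in-each-degree Hodge-theoretic argument — except of course that here the ambient spaces $\Oz^r(X,V)$ are Fréchet, not Banach, so I would instead pass to a Sobolev-type completion. Concretely, I would introduce the Banach spaces $W^r$ of $V$-valued $(0,r)$-forms of class $C^k$ (or $L^2_s$ for $s$ large), on which $\pz$ and its formal adjoint $\pz^*$ act, and for which the Hodge Laplacian $\bx = \pz\pz^* + \pz^*\pz$ is elliptic. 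Compactness of $X$ and ellipticity give the Hodge decomposition $W^r = \mathcal H^r \oplus \pz(W^{r+1}) \oplus \pz^*(W^{r-1})$, with $\mathcal H^r = \Ker\bx|_{W^r}$ finite dimensional, and $H^q_\pz(X,V) \cong \mathcal H^q$ as topological vector spaces, the Fréchet topology agreeing with the (finite-dimensional, hence canonical) topology on $\mathcal H^q$.

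Next I would tensor this whole picture with $B$. Since each $\mathcal H^r$ is finite dimensional, $\mathcal H^r \otimes B \cong B^{\oplus \dim \mathcal H^r}$ carries an unambiguous Banach space structure, and the Hodge decomposition tensorizes: the completed complex $(W^r \widehat\otimes B, \pz\otimes\id)$ still has an elliptic Laplacian $\bx \otimes \id$ (ellipticity is a statement about the symbol, unaffected by the constant coefficient $\id_B$), so its cohomology in degree $q$ is $\mathcal H^q \otimes B$. By the same Dolbeault comparison applied to $V\otimes B$, this computes $H^q(X, V\otimes B)$ up to topological isomorphism. Tracking the maps, the composite $H^q(X,V)\otimes B \cong \mathcal H^q \otimes B \cong H^q(X,V\otimes B)$ is precisely the map induced by $\frak i$, because $\frak i$ intertwines $\pz$ with $\pz\otimes\id$ and sends harmonic forms to harmonic forms. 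Finally I would check that this isomorphism is topological: on the $V$-side the quotient topology on $H^q(X,V)$ equals the subspace topology inherited from $\mathcal H^q \subset W^q$ (this is standard for elliptic complexes on compact manifolds, and is where one uses that $\pz\Oz^{q-1}(X,V)$ is closed, hence $H^q_\pz(X,V)$ Hausdorff); tensoring a topological isomorphism of a Fréchet space with a fixed Banach space $B$ in the natural (injective/projective — they coincide here because one factor is finite dimensional) sense preserves topological isomorphy, and the same closed-range statement for $\pz\otimes\id$ on $W^\bullet\widehat\otimes B$ gives the quotient topology on $H^q(X,V\otimes B)$.

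The main obstacle I anticipate is the comparison of topologies, specifically showing that $\pz$ (and $\pz\otimes\id$) have closed range at the level of the Fréchet spaces $\Oz^\bullet$ — equivalently, that the Dolbeault groups are Hausdorff and that passing between the Fréchet model and the Banach (Sobolev or $C^k$) model does not change the induced topology on cohomology. For $V$ of finite rank over compact $X$ this is classical (Cartan–Serre finiteness, see \eqref{hodge1}, plus elliptic regularity: harmonic representatives are smooth and the various norms are equivalent on the finite-dimensional harmonic space). The slightly delicate point is doing this uniformly after tensoring with $B$: one must ensure the elliptic estimate for $\bx\otimes\id$ and the resulting decomposition are genuinely $B$-valued statements, which follows by applying the scalar elliptic estimates coordinatewise against a dense set of functionals on $B$ and invoking Lemma~\ref{emb2} to control the finite-rank pieces. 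A secondary, more bookkeeping-heavy step is verifying that the abstract Hodge isomorphism agrees with the concrete map induced by $\frak i$ rather than some twisted version of it; this is handled by naturality of the Hodge projection under the inclusion $\Oz^\bullet(X,V)\otimes B \hookrightarrow \Oz^\bullet(X, V\otimes B)$.
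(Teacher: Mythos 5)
Your overall strategy -- compute both sides by Hodge theory for the finite rank bundle $V$ and then ``tensor the Hodge decomposition with $B$'' -- is exactly the strategy of the paper, but the step where you pass from the $V$-valued to the $(V\otimes B)$-valued decomposition is asserted rather than proved, and the justification you offer does not work. You write that $\bx\otimes\id_B$ is ``still elliptic, so its cohomology in degree $q$ is $\mathcal H^q\otimes B$.'' Ellipticity of the symbol is not the issue: the standard elliptic package (Fredholmness, closed range, existence of the Green's operator, hence the decomposition $W^r=\mathcal H^r\oplus\operatorname{im}\pz\oplus\operatorname{im}\pz^*$) rests on the compactness of Sobolev embeddings, and Rellich compactness fails for sections valued in an infinite dimensional Banach space $B$. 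Indeed $\Ker(\bx\otimes\id_B)\supset\mathcal H^q\otimes B$ is infinite dimensional, so $\bx\otimes\id_B$ is certainly not Fredholm, and there is no off-the-shelf Hodge theorem to invoke. Your proposed remedy -- testing against a dense set of functionals $\oz\in B^*$ -- only yields a priori estimates for the scalar forms $\oz\circ f$; it does not let you \emph{solve} $\bx u=f-Hf$ with a $B$-valued $u$, which is the actual content needed. (There is also a secondary issue with which completed tensor product $W^r\widehat\otimes B$ you mean and whether it coincides with the space of $B$-valued Sobolev forms; for infinite dimensional $B$ neither the injective nor the projective tensor product gives Bochner--Sobolev spaces in general.)

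The paper closes precisely this gap by a different mechanism: it never redoes elliptic theory with Banach coefficients. Instead it takes the scalar Hodge operators $H$ and $L=\pz^*G$ on $\Oz^{q}(X,V)$ and extends them, via Leiterer's operator-extension result (Lemma \ref{exte}), to continuous operators $H^B$, $L^B$ on all of $\Oz^{q}(X,V\otimes B)$ agreeing with $H\otimes\id_B$ and $L\otimes\id_B$ on the dense subspace $\Oz^{q}(X,V)\otimes B$. The identity $\id=H^B+\pz L^B+L^B\pz$ then follows by density, and it alone yields the isomorphism $H^{q}_{\pz}(X,V\otimes B)\cong H^B\Oz^{q}(X,V\otimes B)$ (Proposition \ref{iso1}); finally Lemma \ref{emb2} (injectivity, dense range, and closedness of $\frak{i}(F\otimes B)$ for finite dimensional $F$, applied to $F=H\Oz^{q}(X,V)$) plus the open mapping theorem identifies $H^B\Oz^{q}(X,V\otimes B)$ with $(H\Oz^{q}(X,V))\otimes B$. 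If you want to repair your argument, the missing ingredient is exactly such an extension theorem for $T\otimes\id_B$ (which can also be obtained from nuclearity of the Fr\'echet space of smooth forms, but not from ``ellipticity is a symbol condition''); you also need an argument that every $(V\otimes B)$-valued harmonic form actually lies in the algebraic tensor product $\mathcal H^q\otimes B$, which your sketch takes for granted.
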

Observe that $\dim H^{q}(X,V) < \wz$, so the topology on $H^{q}(X,V) \otimes B$ is defined as above. The special case $H^{q}(X,V)=0$ was earlier proved by Leiterer in \cite{Li}. Theorem \ref{thm3} shows that for bundles of infinite rank, we cannot expect a finiteness theorem as \eqref{hodge1}. \\

We start by introducing notation from Hodge theory. Let $X$ be a compact manifold and $V \to X$ a finite rank vector bundle. Fix hermitian metrics on $X$ and $V$; they determine an inner product $(f,g)$ on $\Oz^{q}(X,V)$. With the adjoint $\pz^{*}$ of $\pz$, we set
\[\bx = \bx_{q} = \pz_{q-1}\pz^{*}_{q} + \pz^{*}_{q+1}\pz_{q} : \Oz^{q}(X,V) \to \Oz^{q}(X,V) \text{.}\]
By Hodge's theorem (see e.g. \cite{W}), there are the orthogonal projection $H = H_{q} : \Oz^{q}(X,V) \to H \Oz^{q}(X,V)$ to harmonic $(0,q)$-forms, and Green's operator $G = G_{q} : \Oz^{q}(X,V) \to \Oz^{q}(X,V)$ satisfying
\begin{align}
\label{hodged}
\id = H + \bx G \text{, } H \pz = \pz H = 0 \text{, and } G \pz = \pz G \text{.}
\end{align}

It is not known whether for general Banach bundles there is anything like a Hodge decomposition. But we will use \eqref{hodged} to define such a decomposition for bundles of form $V \otimes B$.

\begin{lem}
\label{exte}
Let $X$ be a complex manifold, $V \to X$ a holomorphic Banach bundle of finite rank, and $B$ a Banach space. Given any $q$ and $q'$, if $T : \Oz^{q}(X,V) \to \Oz^{q'}(X,V)$ is a continuous linear operator, then there exists a continuous linear operator
\[T^{B} : \Oz^{q}(X,V \otimes B) \to \Oz^{q'}(X,V \otimes B)\]
such that $T^{B} = T \otimes \id_{B}$ on $\Oz^{q}(X,V) \otimes B$.
\end{lem}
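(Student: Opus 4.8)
The plan is to construct $T^{B}$ by transposing $T$ through the dual space $B^{*}$. Fix an atlas $\{(U_{i},\psi_{i})\}$ of $X$ with trivializations $\fz_{i}:V|U_{i}\to U_{i}\times\C^{r}$, $r=\rank V$, and holomorphic transition matrices $g_{ij}$. For $\beta\in B^{*}$ let $\langle\beta\rangle:V_{x}\otimes B\to V_{x}$ denote $\id_{V_{x}}\otimes\beta$, and for $f\in\Oz^{q}(X,V\otimes B)$ let $\langle\beta,f\rangle\in\Oz^{q}(X,V)$ be the $(0,q)$-form obtained by applying $\langle\beta\rangle$ fiberwise; in the frame $\fz_{i}$ its components are $\beta$ composed with those of $f$, so $\langle\beta,f\rangle$ is smooth and $\norm{\langle\beta,f\rangle}_{K,k}\le\norm{\beta}\,\norm{f}_{K,k}$ for the seminorms \eqref{norm1}. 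The first step is to show that $\beta\mapsto\langle\beta,f\rangle$ is continuous from the unit ball $B^{*}_{\le 1}$, equipped with the weak-$*$ topology, into the Fr\'echet space $\Oz^{q}(X,V)$. This rests on two facts: for any compact $K\st U_{i}$ the values at points of $K$ of the $B$-valued component forms of $f$, and of all their derivatives along the fixed vector fields $\llist \xi.m.$, form a relatively compact subset of $B$ (as $f$ is smooth); and a weak-$*$ convergent sequence in $B^{*}_{\le 1}$ converges uniformly on compact subsets of $B$. Together these give $\norm{\langle\beta_{n},f\rangle-\langle\beta,f\rangle}_{K,k}\to0$ whenever $\beta_{n}\to\beta$ weak-$*$.

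Composing with $T$, the map $\beta\mapsto T\langle\beta,f\rangle$ is linear and weak-$*$-to-$\Oz^{q'}(X,V)$ continuous on $B^{*}_{\le 1}$. For each $i$, evaluation at a point $x\in U_{i}$ against tangent vectors, followed by reading the $\alpha$-th component in the $\fz_{i}$-frame, turns $\beta\mapsto T\langle\beta,f\rangle$ into a linear functional of $\beta$ that is weak-$*$ continuous on $B^{*}_{\le 1}$; by the Krein--\v Smulian theorem its kernel is weak-$*$ closed, so the functional is represented by a unique element of $B$. Taking those elements as the values of the corresponding components produces, over each $U_{i}$, a $(\C^{r}\otimes B)$-valued $(0,q')$-form $(T^{B}f)^{i}$ characterized by the requirement that $\langle\beta\rangle(T^{B}f)^{i}$ be the $\fz_{i}$-frame representative of $T\langle\beta,f\rangle$ for every $\beta$. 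Smoothness of $(T^{B}f)^{i}$ follows from the continuity of $T$: the latter bounds the family $\{T\langle\beta,f\rangle:\norm{\beta}\le 1\}$ in every seminorm, which forces the difference quotients of $(T^{B}f)^{i}$ to be Cauchy in $B$, and likewise for higher derivatives. Since $T\langle\beta,f\rangle$ is a global section, the characterization forces $(T^{B}f)^{i}=g_{ij}(T^{B}f)^{j}$ on overlaps, so the $(T^{B}f)^{i}$ glue to an element $T^{B}f\in\Oz^{q'}(X,V\otimes B)$.

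Three routine points remain. Linearity of $f\mapsto T^{B}f$ is immediate from the characterization. For continuity, $\norm{T^{B}f}_{K,k}=\sup_{\norm{\beta}\le 1}\norm{T\langle\beta,f\rangle}_{K,k}\le C\sup_{\norm{\beta}\le 1}\norm{\langle\beta,f\rangle}_{K',k'}\le C\norm{f}_{K',k'}$, with $C,K',k'$ furnished by the continuity of $T$. Finally, if $f=\sum_{j}g_{j}\otimes b_{j}$ with $g_{j}\in\Oz^{q}(X,V)$ and $b_{j}\in B$, then $\langle\beta,f\rangle=\sum_{j}\beta(b_{j})g_{j}$, hence $T\langle\beta,f\rangle=\langle\beta,\sum_{j}Tg_{j}\otimes b_{j}\rangle$ for all $\beta$, and uniqueness gives $T^{B}f=\sum_{j}Tg_{j}\otimes b_{j}=(T\otimes\id_{B})f$, as required.

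The main obstacle is that everything hinges on using the \emph{weak-$*$} topology on $B^{*}_{\le 1}$: under the norm topology the construction would only land in $B^{**}$, so one must establish the compactness-based continuity of $\beta\mapsto\langle\beta,f\rangle$ and then genuinely recover elements of $B$, which is exactly where Krein--\v Smulian is needed. A more abstract alternative would be to identify $\Oz^{q}(X,V\otimes B)$ with the completed tensor product $\Oz^{q}(X,V)\,\widehat{\otimes}\,B$ --- legitimate because $\Oz^{q}(X,V)$ is a nuclear Fr\'echet space, so the injective and projective tensor topologies coincide and $\frak i$ becomes the canonical dense inclusion --- and then set $T^{B}=T\,\widehat{\otimes}\,\id_{B}$; in that approach the work is shifted entirely into proving the topological identification.
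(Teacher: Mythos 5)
Your proof is correct, but it takes a genuinely different route from the paper: the paper's entire proof is a citation of Leiterer's Proposition 2.2 in \cite{Li}, which gives the extension for operators between $C^{k}$ and $C^{l}$ forms, plus the observation that a continuous operator between the Fr\'echet spaces of smooth forms factors through such $C^{k}\to C^{l}$ operators. You instead give a self-contained construction: scalarize $f$ by functionals $\beta\in B^{*}$, push the scalarizations through $T$, and recover a $B$-valued form by showing that each pointwise coefficient is a linear functional on $B^{*}$ that is weak-$*$ continuous on bounded sets, hence by Krein--\v Smulian given by an element of $B$. The key steps all check out: relative compactness in $B$ of the (finitely many) derivative values entering a fixed seminorm $\norm{\cdot}_{K,k}$; uniform convergence of bounded weak-$*$ convergent families on compact subsets of $B$; the uniform bounds $\sup_{\norm{\beta}\le 1}\norm{T\langle\beta,f\rangle}_{K,k}<\infty$ forcing norm-Cauchy difference quotients, so that weak smoothness upgrades to strong smoothness; and the fact that the maps $\id\otimes\beta$ separate points of $V_{x}\otimes B$, which yields both the gluing and the identity $T^{B}=T\otimes\id_{B}$ on the algebraic tensor product. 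Two minor remarks, neither a gap: for non-separable $B$ the weak-$*$ topology on $B^{*}_{\le 1}$ is not metrizable, so the continuity feeding into Krein--\v Smulian should be verified with nets rather than sequences (your compactness argument applies verbatim to bounded nets); and the continuity of $T$ between Fr\'echet spaces bounds a target seminorm by a finite maximum of source seminorms rather than a single one, which changes nothing. As for what each approach buys: the paper's proof is two lines but opaque without \cite{Li}, while yours makes the functional-analytic mechanism explicit and works directly at the level of smooth forms; your closing remark correctly identifies the nuclear-Fr\'echet tensor-product identification $\Omega^{q}(X,V)\,\widehat{\otimes}\,B\cong\Omega^{q}(X,V\otimes B)$ as the natural abstract alternative, with the work merely relocated.
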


\begin{proof}
In \cite[Proposition 2.2]{Li}, Leiterer proved this for operators acting between $C^{k}$ and $C^{l}$ forms. The same proof gives the result for smooth forms. Alternatively, Leiterer's Proposition 2.2 implies our Lemma, since the continuity of $T$ means that for every $l \in \N$ there is a $k \in \N$ so that $T$ extends to a continuous linear operator $C^{k}_{q}(X,V) \to C^{l}_{q'}(X,V)$.
\end{proof}

Denote $L = L_{q} = \pz^{*} G_{q} : \Oz^{q}(X,V) \to \Oz^{q-1}(X,V)$. We shall apply Lemma \ref{exte} to the operators $H$ and $L$. Note that \eqref{hodged} implies that on  $\Oz^{q}(X,V)\otimes B$
\begin{align}
\label{hodge2}
\begin{split}
\id &= H \otimes \id_{B} + \bx G \otimes \id_{B} \\
&= H \otimes \id_{B} +  \pz_{V} L_{q} \otimes \id_{B} + L_{q+1} \pz_{V} \otimes \id_{B} \text{.}
\end{split}
\end{align}

\begin{lem}
Let $X$ be a compact complex manifold, $V \to X$ a holomorphic Banach bundle of finite rank, and $B$ a Banach space. Then the terms on the right hand side in \eqref{hodge2} can be extended to continuous linear operators on $\Oz^{q}(X,V \otimes B)$, and the extended operators will satisfy the same identity.
\end{lem}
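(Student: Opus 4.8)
The plan is to read off the extended operators from Lemma~\ref{exte} and then push the identity \eqref{hodge2} from the dense subspace $\Oz^{q}(X,V)\otimes B$ to all of $\Oz^{q}(X,V\otimes B)$. First I would record that $H=H_{q}$ and $L_{q}=\pz^{*}G_{q}$, $L_{q+1}=\pz^{*}G_{q+1}$ are continuous linear operators between the relevant Fr\'echet spaces $\Oz^{\bullet}(X,V)$: since $X$ is compact, the harmonic projection $H_{q}$ is given by $f\mapsto\sum_{j}(f,e_{j})\,e_{j}$ for a fixed orthonormal basis $\{e_{j}\}$ of the finite-dimensional (and smooth) space of harmonic forms, hence is continuous; $G_{q}$ is continuous by elliptic regularity; and $\pz^{*}$, being a first-order differential operator with smooth coefficients, is continuous. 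Lemma~\ref{exte} then yields continuous linear operators $H^{B}:\Oz^{q}(X,V\otimes B)\to\Oz^{q}(X,V\otimes B)$, $L_{q}^{B}:\Oz^{q}(X,V\otimes B)\to\Oz^{q-1}(X,V\otimes B)$, and $L_{q+1}^{B}:\Oz^{q+1}(X,V\otimes B)\to\Oz^{q}(X,V\otimes B)$ that restrict to $H_{q}\otimes\id_{B}$, $L_{q}\otimes\id_{B}$, $L_{q+1}\otimes\id_{B}$ on the algebraic tensor products. For the $\pz$-terms I would observe that $\pz=\pz_{V\otimes B}$ is already a continuous operator and, by its local definition together with $\rank V<\wz$, restricts to $\pz_{V}\otimes\id_{B}$ on $\Oz^{\bullet}(X,V)\otimes B$; so no new extension is needed here. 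Since $\Oz^{q}(X,V)\otimes B$ is dense in $\Oz^{q}(X,V\otimes B)$ by Lemma~\ref{emb2} and the latter space is Hausdorff, each of the above extensions is in fact the unique continuous one.

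Next I would assemble the operator
\[ P=\id-H^{B}-\pz_{V\otimes B}\,L_{q}^{B}-L_{q+1}^{B}\,\pz_{V\otimes B}:\Oz^{q}(X,V\otimes B)\to\Oz^{q}(X,V\otimes B). \]
Each term is a composition of continuous linear maps — here one uses that $L_{q}^{B}$ takes values in $\Oz^{q-1}(X,V\otimes B)$, where $\pz_{V\otimes B}$ is defined, and that $\pz_{V\otimes B}$ takes values in $\Oz^{q+1}(X,V\otimes B)$, where $L_{q+1}^{B}$ is defined — so $P$ is continuous and linear. On the dense subspace $\Oz^{q}(X,V)\otimes B$ all the extensions specialize to the corresponding $(\cdot)\otimes\id_{B}$, so there $P$ coincides with $\id-H_{q}\otimes\id_{B}-\pz_{V}L_{q}\otimes\id_{B}-L_{q+1}\pz_{V}\otimes\id_{B}$, which vanishes by \eqref{hodge2}. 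A continuous linear map that vanishes on a dense subset of a Hausdorff topological vector space is identically zero; hence $P\equiv 0$, i.e. $\id=H^{B}+\pz_{V\otimes B}L_{q}^{B}+L_{q+1}^{B}\pz_{V\otimes B}$ on all of $\Oz^{q}(X,V\otimes B)$, which is the asserted identity for the extended operators.

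I do not anticipate a real obstacle: the argument is ``extend, then invoke density''. The two points that warrant care are (i) checking that $\pz_{V\otimes B}$ genuinely restricts to $\pz_{V}\otimes\id_{B}$ — this I would verify in a local trivialization $V|U\cong U\times\C^{r}$, in which $(V\otimes B)|U\cong U\times(\C^{r}\otimes B)$ and the formula $\pz(\sum_{J}f_{J}\,d\bar z^{J})=\sum_{i,J}(\partial f_{J}/\partial\bar z_{i})\,d\bar z_{i}\wedge d\bar z^{J}$ is visibly $\otimes\id_{B}$-equivariant; and (ii) using uniqueness of continuous extensions (again density plus Hausdorffness) to know that $\pz_{V\otimes B}L_{q}^{B}$ and $L_{q+1}^{B}\pz_{V\otimes B}$ are honest extensions of $\pz_{V}L_{q}\otimes\id_{B}$ and $L_{q+1}\pz_{V}\otimes\id_{B}$, which is exactly what the last sentence of the lemma is about. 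Equivalently, one could bypass the compositions altogether by applying Lemma~\ref{exte} directly to the continuous operators $\pz_{V}L_{q}$ and $L_{q+1}\pz_{V}$ on $\Oz^{q}(X,V)$ and running the same density argument; the formulation with $\pz_{V\otimes B}$ factored out is the one that will be convenient later.
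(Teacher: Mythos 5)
Your proposal is correct and follows essentially the same route as the paper: establish continuity of $H$ and $L$ on $\Oz^{q}(X,V)$, invoke Lemma~\ref{exte} to produce $H^{B}$ and $L^{B}$, and then transfer the identity \eqref{hodge2} from the dense subspace $\Oz^{q}(X,V)\otimes B$ to all of $\Oz^{q}(X,V\otimes B)$ by continuity. Your extra care in checking that $\pz_{V\otimes B}$ restricts to $\pz_{V}\otimes\id_{B}$ on the algebraic tensor product is a detail the paper leaves implicit, but it is the same argument.
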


\begin{proof}
$L$ and $H$ are continuous on $\Oz^{q}(X,V)$ (see e.g. \cite[Chapter 4, Section 4]{W}). By Lemma \ref{exte} there exist continuous linear operators  $L^{B}$ and $H^{B}$ on $\Oz^{q}(X,V \otimes B)$ such that  $L^{B} = L^{B}_{q} = L_{q} \otimes \id_{B}$ and $H^{B} = H \otimes \id_{B}$ on $\Oz^{q}(X,V) \otimes B$. As $\Oz^{q}(X,V) \otimes B$ is dense in $\Oz^{q}(X,V \otimes B)$, the identity,
\begin{align}
\label{hodge3}
\id = H^{B} +  \pz_{V\otimes B} L^{B}_{q} + L^{B}_{q+1} \pz_{V\otimes B} \text{,}
\end{align}
follows.
\end{proof}

\begin{prop}
\label{iso1}
Let $X$ be a compact complex manifold, $V \to X$ a holomorphic Banach bundle of finite rank, and $B$ a Banach space. Then
\begin{align}
\label{HB}
H^{B} : Z^{q}_{\pz}(X, V \otimes B) \to H^{B} \Oz^{q}(X,V\otimes B)
\end{align}
descends to an isomorphism
\[H^{q}_{\pz}(X, V \otimes B) \cong H^{B} \Oz^{q}(X,V\otimes B) \st \Oz^{q}(X, V \otimes B)\]
of locally convex topological vector spaces.
\end{prop}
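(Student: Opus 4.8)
The plan is to exhibit (\ref{HB}) as a continuous surjection with closed graph—indeed with a continuous partial inverse—so that it descends to a topological isomorphism on the quotient. First I would check that $H^B$ maps $Z^q_{\pz}(X,V\otimes B)$ into itself in the right way: applying the extended identity (\ref{hodge3}) to a $\pz$-closed form $f$ gives $f = H^B f + \pz_{V\otimes B} L^B_q f$, since $\pz_{V\otimes B} f = 0$ kills the last term. Thus $H^B f$ is cohomologous to $f$, and in particular $H^B f$ is $\pz$-closed, so that (\ref{HB}) does make sense as a map into $H^B\Oz^q(X,V\otimes B)\subset Z^q_{\pz}(X,V\otimes B)$. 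Moreover this same computation shows that the composite $H^B\Oz^q(X,V\otimes B)\hookrightarrow Z^q_{\pz}(X,V\otimes B)\xrightarrow{H^B} H^B\Oz^q(X,V\otimes B)$ differs from the identity by $H^B\pz_{V\otimes B}L^B_q$, which I would argue vanishes on $\pz$-closed forms (it already vanishes on $\Oz^q(X,V)\otimes B$ because $H\pz=0$ there, and both sides are continuous and that subspace is dense in $Z^q_{\pz}$—using that $Z^q_{\pz}$ is closed in $\Oz^q$ and that $\frak{i}$ has dense range, together with the fact that $\Oz^{q-1}(X,V)\otimes B$ is dense enough). Hence $H^B$ restricted to $H^B\Oz^q(X,V\otimes B)$ is the identity.

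Next I would identify the kernel of (\ref{HB}). If $H^B f = 0$ for $f\in Z^q_{\pz}$, then (\ref{hodge3}) gives $f = \pz_{V\otimes B}(L^B_q f)$, so $f\in\pz\Oz^{q-1}(X,V\otimes B)$; conversely, if $f = \pz_{V\otimes B}g$ then $H^B f = H^B\pz_{V\otimes B} g$, and again by density and continuity (the relation $H\pz = 0$ on $\Oz^{q-1}(X,V)\otimes B$) this is $0$. Therefore the kernel of (\ref{HB}) is exactly $\pz\Oz^{q-1}(X,V\otimes B)$, and $H^B$ is surjective onto $H^B\Oz^q(X,V\otimes B)$ because it is the identity there. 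Consequently $H^B$ descends to a continuous linear bijection $H^q_{\pz}(X,V\otimes B)\to H^B\Oz^q(X,V\otimes B)$. Continuity of this descended map is immediate from continuity of $H^B$ and the definition of the quotient topology; the content is the continuity of the inverse.

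For the inverse, I would use the obvious candidate: the composition $H^B\Oz^q(X,V\otimes B)\hookrightarrow Z^q_{\pz}(X,V\otimes B)\to H^q_{\pz}(X,V\otimes B)$, i.e. ``include a harmonic representative, then pass to its cohomology class.'' This is manifestly continuous (inclusion followed by the quotient map), and by the computations above it is a two-sided inverse to the descended $H^B$: composing one way gives the identity on $H^B\Oz^q$ by the first paragraph, and composing the other way sends the class of $f$ to the class of $H^B f$, which equals the class of $f$ since $f - H^B f = \pz_{V\otimes B}L^B_q f$ is exact. Hence $H^q_{\pz}(X,V\otimes B)\cong H^B\Oz^q(X,V\otimes B)$ as locally convex spaces, and in particular $H^q_{\pz}(X,V\otimes B)$ is Hausdorff.

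The step I expect to be the main obstacle is the density argument used to transfer the Hodge relations $H\pz = 0 = \pz H$ from the algebraic tensor product $\Oz^q(X,V)\otimes B$ to all of $\Oz^q(X,V\otimes B)$, and in particular to verify that $H^B\pz_{V\otimes B}$ and $\pz_{V\otimes B}$-exactness interact as expected on the relevant (closed) subspaces. One must be careful that $\Oz^{q-1}(X,V)\otimes B$ is dense in $\Oz^{q-1}(X,V\otimes B)$ (Lemma \ref{emb2}) so that $\pz_{V\otimes B}\bigl(\Oz^{q-1}(X,V)\otimes B\bigr)$ is dense in $\pz\Oz^{q-1}(X,V\otimes B)$, and that the operators in play are genuinely continuous on the Fréchet spaces $\Oz^r(X,V\otimes B)$ so that identities verified on a dense set persist. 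Once that bookkeeping is done, everything else is formal diagram chasing with continuous maps between Fréchet (or quotients of Fréchet) spaces.
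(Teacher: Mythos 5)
Your proposal is correct and follows essentially the same route as the paper: extend the Hodge identities $\pz H^{B}=H^{B}\pz=0$ and $H^{B}H^{B}=H^{B}$ from the dense subspace $\Oz^{r}(X,V)\otimes B$ by continuity, use \eqref{hodge3} to identify $\Ker H^{B}\cap Z^{q}_{\pz}$ with the exact forms, and observe that the inverse $H^{B}f\mapsto[f]$ is continuous (the paper phrases this last step as the seminorm estimate $\norm{[f]}_{K,k}\le\norm{H^{B}f}_{K,k}$, which is exactly your ``inclusion followed by quotient map'' observation). The density step you flag as the main obstacle is in fact easier than you fear: the identities $H^{B}\pz_{V\otimes B}=0$, $\pz_{V\otimes B}H^{B}=0$, $H^{B}H^{B}=H^{B}$ hold on all of $\Oz^{r}(X,V\otimes B)$ by Lemma \ref{emb2} and continuity, so you never need $\Oz^{q}(X,V)\otimes B$ to be dense in the closed subspace $Z^{q}_{\pz}$.
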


\begin{proof}
\eqref{hodged} and density imply $\pz H^{B} = H^{B} \pz = 0$, and $H^{B} = H^{B}H^{B}$. This shows first that $H^{B} \Oz^{q}(X,V\otimes B) = H^{B} H^{B} \Oz^{q}(X,V\otimes B) \st H^{B} Z^{q}_{\pz}(X,V\otimes B)$, so \eqref{HB} is a continuous surjection; and second that \eqref{HB} descends to a continuous surjection $H^{q}_{\pz}(X, V \otimes B) \to H^{B} \Oz^{q}(X,V\otimes B)$. This latter is injective by \eqref{hodge3}. It also has a continuous inverse, because with the seminorms $\norm{\phantom{ab}}_{K,k}$ in \eqref{norm1} and the corresponding quotient seminorms on $H^{q}_{\pz}(X, V \otimes B)$ one can estimate the size of the cohomology class $[f]$ of any $f \in Z^{q}_{\pz}(X,V \otimes B)$ as
\[
\norm{[f]}_{K,k} = \inf_{g}{\norm{f + \pz g}}_{K,k} \le \norm{f - \pz L^{B} f}_{K,k} = \norm{H^{B}f}_{K,k} \text{.}
\]
\end{proof}

Now we are ready to prove Theorem \ref{thm3}.

\begin{proof}[Proof of Theorem \ref{thm3}]
In view of Proposition \ref{iso1}, it suffices to show that by restricting $\frak{i}$ of \eqref{emb1}, we obtain a topological isomorphism
\[
\tilde{\frak{i}} : (H \Oz^{q}(X,V)) \otimes B \to H^{B} \Oz^{q}(X,V \otimes B) \text{.}
\]
Lemma \ref{emb2} implies that $\tilde{\frak{i}}$ is continuous. Since $\frak{i}$ is injective, so is $\tilde{\frak{i}}$. By Lemma \ref{emb2} the range of $\frak{i}$ is dense in $\Oz^{q}(X,V \otimes B)$; hence the range of $H^{B}\frak{i}$, i.e. the range of $\tilde{\frak{i}}$, is dense in $H^{B} \Oz^{q}(X,V \otimes B)$. By Lemma \ref{emb2} the range is also closed, so it must be $H^{B}\Oz^{q}(X,V \otimes B)$, and by the open mapping theorem $\tilde{\frak{i}}$ is a topological isomorphism.
\end{proof}


\section{The Splitting of Banach Bundles}
\label{sec:4}
We observed that cohomology groups of Banach bundles can be infinite dimensional. Still, Leiterer and Lempert have proved finiteness theorems for a certain type of Banach bundles. Lempert's finiteness theorem will be the starting point in the proof of our splitting theorem.

\begin{thmq}[Lempert \cite{Le3}]
Let $X$ be a compact complex manifold, $E$, $F$ holomorphic Banach bundles over $X$ that are compact perturbations of one another, and $q = 0, 1, \ldots$. If $H^{q+1}(X,E)$ is Hausdorff and $\dim H^{q}(X,E) < \infty$, then $H^{q}(X,F)$ is also finite dimensional (and Hausdorff).
\end{thmq}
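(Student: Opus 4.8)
The plan is to recast the statement as one about complexes of Banach spaces and then invoke the stability of the Fredholm property under compact perturbations.

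\emph{Step 1 (a Banach model).} Since $X$ is compact, fix a \emph{finite} cover $\FW$ that is special with respect to both $E$ and $F$ and refines the cover appearing in the definition of compact perturbation. For a Banach bundle $G$ write $A^q(G)=\Ker\pz\cap C^{q0}_b(\FW,G)$, the space of bounded holomorphic \v{C}ech $q$-cochains; with the norm $\norm{\cdot}_b$ this is a Banach space (uniform limits of bounded holomorphic sections are bounded holomorphic), $\dz$ maps $A^q(G)\to A^{q+1}(G)$ boundedly, and, using that $\FW$ admits a Stein refinement, \cite[Corollary 2.6]{Le3} gives $H^q\bigl(A^\bullet(G),\dz\bigr)=H^q_b(\FW,G)\cong H^q(X,G)$ as topological vector spaces. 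Two remarks: "$H^{q+1}(X,E)$ Hausdorff" means precisely that $\dz(A^q(E))$ is closed in $A^{q+1}(E)$; and in any complex of Banach spaces $\dim H^q<\infty$ already forces $H^q$ Hausdorff, since a bounded map into a Banach space with finite-codimensional range has closed range (apply this to $\dz\colon A^{q-1}(E)\to\Ker(\dz\colon A^q(E)\to A^{q+1}(E))$). So it is enough to show: if $\dz(A^q(E))$ is closed and $\dim H^q(A^\bullet(E))<\infty$, then $\dim H^q(A^\bullet(F))<\infty$.

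\emph{Step 2 (a comparison morphism modulo compacts).} Each restriction $\tilde\fz_W\colon E|W\to F|W$, $W\in\FW$, is holomorphic and fibrewise Fredholm, and $\tilde\fz_W-\tilde\fz_{W'}$ is fibrewise compact; since $\overline{\bigcap W_i}$ is relatively compact in a Stein set trivializing both bundles, this difference induces a genuinely compact operator on the corresponding spaces of bounded holomorphic sections. Define $\Phi\colon A^\bullet(E)\to A^\bullet(F)$ by letting its component over a simplex $(W_0,\dots,W_q)$ be $\tilde\fz_{W_0}$. Then each $\Phi^q$ is bounded and fibrewise Fredholm, and a short computation shows that the $(W_0,\dots,W_{q+1})$-component of $\dz\Phi-\Phi\dz$ is $(\tilde\fz_{W_1}-\tilde\fz_{W_0})$ applied to the $(W_1,\dots,W_{q+1})$-entry, so $\dz\Phi-\Phi\dz$ is compact in every degree. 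Because "compact perturbation of one another" is symmetric, choosing holomorphic fibrewise parametrices $\tilde\psi_W$ of the $\tilde\fz_W$ and running the same recipe produces $\Psi\colon A^\bullet(F)\to A^\bullet(E)$ with $\Psi\Phi-\id$ and $\Phi\Psi-\id$ compact in every degree. Thus $\Phi$ and $\Psi$ are mutually inverse morphisms of complexes of Banach spaces \emph{up to compact operators}.

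\emph{Step 3 (the perturbation lemma) and the main obstacle.} It remains to prove: if $C^\bullet,D^\bullet$ are complexes of Banach spaces and $\Phi\colon C^\bullet\to D^\bullet$, $\Psi\colon D^\bullet\to C^\bullet$ are degreewise bounded with $\dz\Phi-\Phi\dz$, $\dz\Psi-\Psi\dz$, $\Psi\Phi-\id$, $\Phi\Psi-\id$ all degreewise compact, and $\dz(C^q)$ is closed with $\dim H^q(C^\bullet)<\infty$, then $\dim H^q(D^\bullet)<\infty$. The route I would take is to encode the two hypotheses on $C^\bullet$ as the Fredholmness of one bounded operator $\Theta_C$ assembled from $\dz\colon C^{q-1}\to C^q$, the inclusion $\Ker(\dz\colon C^q\to C^{q+1})\hookrightarrow C^q$, and a finite-rank projection onto a complement of the coboundaries inside the cocycles; then transport $\Theta_C$ across $\Phi$ and $\Psi$, observe that the result differs from the analogous operator for $D^\bullet$ by a compact operator, conclude the latter is Fredholm, and unwind. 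This last step is where I expect the real difficulty: because $\Phi$ genuinely fails to be a chain map, cohomology classes cannot simply be pushed forward, so $\Theta_C$ must be designed so that its Fredholmness is simultaneously equivalent to "$\dz(C^q)$ closed and $\dim H^q<\infty$" and stable under all the compact errors — those of $\Phi$, of $\Psi$, and of $\dz\Phi-\Phi\dz$ — that intervene. A secondary, more technical point, already in Step 2, is upgrading fibrewise compactness of $\tilde\fz_W-\tilde\fz_{W'}$ to compactness of the induced operator on bounded holomorphic sections: here one uses the strong pseudoconvexity of the $W$'s together with the relative compactness of $\bigcap W_i$ inside a larger trivializing Stein set, so that a uniformly compact family of fibre operators induces a compact operator on sections. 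Feeding $C^\bullet=A^\bullet(E)$, $D^\bullet=A^\bullet(F)$ into the lemma finishes the proof.
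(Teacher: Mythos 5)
First, a point of order: the paper does not prove this statement. It is quoted from Lempert's manuscript \cite{Le3} and used as a black box (its only role here is in deducing Corollary \ref{finite}), so there is no in-paper proof to compare yours against; I can only assess your proposal on its own terms. Your architecture --- a Banach \v Cech model $A^{q}(G)=\Ker\pz\cap C^{q0}_{b}(\FW,G)$, comparison maps that are chain maps modulo compact operators, and a Fredholm-stability lemma --- is the right kind of strategy for this circle of results, and Step 1 is correct as far as it goes.

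There are, however, two genuine gaps. The first is the compactness claim in Step 2, which is false as stated: a fibrewise compact homomorphism does \emph{not} induce a compact operator between spaces of bounded holomorphic sections unless the target domain is relatively compact in the source domain. Concretely, let $W$ be the unit disc, $B=\ell^{2}$, and let $\kappa\equiv K$ be the rank-one projection onto $\C e_{1}$ (a constant, hence holomorphic, fibrewise compact homomorphism of $W\times B$, defined on all of $\C$, so ``relative compactness of $\overline{W}$ in a larger trivializing set'' is available and does not help); then $s_{n}(z)=z^{n}e_{1}$ is a bounded sequence of bounded holomorphic sections, yet $\norm{Ks_{n}-Ks_{2n}}_{b}=\sup_{|z|<1}|z^{n}-z^{2n}|\ge 1/4$, so composition with $\kappa$ is not compact. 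In your setup the error term $\dz\Phi-\Phi\dz$ applies $\tilde{\fz}_{W_{1}}-\tilde{\fz}_{W_{0}}$ to sections over $W_{1}\cap\dots\cap W_{q+1}$ and lands in sections over $W_{0}\cap\dots\cap W_{q+1}$; the latter is not relatively compact in the former (they share boundary), so exactly this failure occurs. The standard repair is to work with two covers, $\FW$ refining $\FV$ with $\overline{W}\st\st\gz(W)$, so that every compact fibre operator is preceded by restriction from a strictly larger domain (there the Taylor-tail argument does give compactness); but this changes the architecture of Steps 1--2, since one must then compare a complex over $\FV$ with one over $\FW$ rather than two complexes over the same cover. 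The second gap is Step 3 itself, which is the mathematical heart of the theorem and which you explicitly leave unproved. Note in particular that $\Phi\dz_{C}\Psi$ is not a differential ($(\Phi\dz_{C}\Psi)^{2}$ is only compact, not zero), so one is forced into the theory of essentially-commuting Fredholm complexes, and the single-degree statement you need --- $\dz(C^{q})$ closed and $\dim H^{q}(C^{\bullet})<\infty$ imply $\dim H^{q}(D^{\bullet})<\infty$, with no hypotheses in other degrees --- is precisely the delicate point; announcing ``an operator $\Theta_{C}$ to be designed'' does not discharge it. As it stands the proposal is a plausible outline, not a proof.
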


\begin{cor}
\label{finite}
Let $X$ be a compact complex manifold, $V \to X$ a holomorpihc Banach bundle of finite rank, and  $E \to X$ a holomorphic Banach bundle that is a compact perturbation of a trivial Banach bundle $T \to X$. If $H^{q}(X,V) = 0$, then $\dim H^{q}(X,V \otimes E) < \infty$.
\end{cor}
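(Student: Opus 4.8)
The plan is to apply Lempert's finiteness theorem to the pair of bundles $V \otimes T$ and $V \otimes E$. Concretely, I would verify that (i) $V \otimes E$ is a compact perturbation of $V \otimes T$, (ii) $\dim H^{q}(X, V \otimes T) < \infty$, and (iii) $H^{q+1}(X, V \otimes T)$ is Hausdorff; then Lempert's theorem, read with $V \otimes T$ playing the role of the bundle with controlled cohomology, gives at once that $H^{q}(X, V \otimes E)$ is finite dimensional (and Hausdorff).

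For (i): write $T = X \times B$, and let $\FU = \{U_{i}\}$ together with Fredholm bundle homomorphisms $\tilde{\fz}_{i} : E|U_{i} \to T|U_{i}$, with $(\tilde{\fz}_{i} - \tilde{\fz}_{j})_{x}$ compact, be data exhibiting $E$ as a compact perturbation of $T$. Set $\psi_{i} = \id_{V|U_{i}} \otimes \tilde{\fz}_{i} : (V \otimes E)|U_{i} \to (V \otimes T)|U_{i}$. Over a point $x$ this is $\id_{V_{x}} \otimes (\tilde{\fz}_{i})_{x}$; picking a basis of the finite dimensional space $V_{x}$ identifies it with a finite direct sum of copies of $(\tilde{\fz}_{i})_{x}$, so it is Fredholm, and similarly $(\psi_{i} - \psi_{j})_{x} = \id_{V_{x}} \otimes (\tilde{\fz}_{i} - \tilde{\fz}_{j})_{x}$ is a finite direct sum of compact operators, hence compact. (One may shrink $\FU$ so that $V$ is trivial on each $U_{i}$ as well; this costs nothing.) This is where the finite rank of $V$ enters in an essential way, and it is the one place where the argument would break for $V$ of infinite rank; otherwise it is routine.

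For (ii) and (iii): by Theorem \ref{thm3}, for every $r$ with $\dim H^{r}(X,V) < \infty$ the embedding $\frak{i}$ induces a topological isomorphism $H^{r}(X,V) \otimes B \cong H^{r}(X, V \otimes B) = H^{r}(X, V \otimes T)$; by Cartan--Serre, \eqref{hodge1}, every $r$ qualifies. Taking $r = q$ and using the hypothesis $H^{q}(X,V) = 0$ gives $H^{q}(X, V \otimes T) \cong 0$, so (ii) holds trivially. Taking $r = q+1$ gives $H^{q+1}(X, V \otimes T) \cong H^{q+1}(X,V) \otimes B$, a finite direct sum of copies of the Banach space $B$, hence a Banach space and in particular Hausdorff, which is (iii). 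Now Lempert's finiteness theorem applies to $V \otimes T$ and $V \otimes E$ and yields $\dim H^{q}(X, V \otimes E) < \infty$. The only real care needed is the bookkeeping in step (i) and making sure Theorem \ref{thm3} is invoked only for exponents $r$ for which its left-hand side is defined --- which, thanks to Cartan--Serre, is no restriction at all.
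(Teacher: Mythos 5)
Your proposal is correct and follows essentially the same route as the paper: tensor the compact-perturbation data with $\id_{V}$ to see that $V\otimes E$ is a compact perturbation of $V\otimes T$, use Theorem \ref{thm3} to identify $H^{q}(X,V\otimes T)$ with $H^{q}(X,V)\otimes B=0$ and $H^{q+1}(X,V\otimes T)$ with the Banach (hence Hausdorff) space $H^{q+1}(X,V)\otimes B$, and then invoke Lempert's finiteness theorem. The paper states these steps without the fiberwise Fredholm/compactness verification you supply, but the argument is the same.
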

\begin{proof}
$V\otimes E$ is a compact perturbation of $V\otimes T$. By Theorem \ref{thm3}, $H^{q+1}(X,V\otimes T)$ is Hausdorff and $H^{q}(X,V\otimes T)$ is finite dimensional. Hence the Corollary follows from Lempert's Theorem.
\end{proof}
In \cite{Li}, Leiterer proved the above Corollary, assuming a mild condition (the compact approximation property) on the fibers of $E$. \\

Our Splitting Theorem \ref{thm1} is a consequence of the following two propositions.
\begin{prop}
\label{mprop1}
Let $X$ be a compact complex manifold and $E \to X$ a holomorphic Banach bundle that is a compact perturbation of a trivial bundle. If $H^{1}(X,\O)=0$ then $E$ has a trivial subbundle of finite corank.
\end{prop}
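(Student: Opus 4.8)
The plan is to construct a bundle map from a trivial Banach bundle of large rank into $E$ that is fiberwise injective with complemented image, i.e. realize a trivial subbundle of finite corank. The starting point is the definition of compact perturbation: there is an open cover $\FU = \{U_i\}$, a trivial bundle $T = X \times B \to X$, and Fredholm morphisms $\tilde{\fz}_i : E|U_i \to T|U_i$ with $\tilde{\fz}_i - \tilde{\fz}_j$ compact on overlaps. Dualizing, one gets morphisms $\tilde{\fz}_i^* : T^*|U_i \to E^*|U_i$, and since $T^* = X \times B^*$, the cocycle $\tilde{\fz}_i^* - \tilde{\fz}_j^*$ is again compact. The idea is that a global section of $E^*$ valued in (a copy of) $B^*$ would, by duality, give the desired trivial subbundle of $E$ of finite corank — provided we can produce enough of them. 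So I would first reformulate the goal: it suffices to find a closed subspace $A \subset B^*$ of finite codimension and a bundle monomorphism $X \times A \to E^*$ with complemented image, equivalently a surjective bundle map $E \to X \times A^*$ (up to the usual finite-dimensional corrections).

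The technical engine should be the cohomological machinery of Sections 2--4. Concretely, one wants to glue the local morphisms $\tilde{\fz}_i$ into something global. The obstruction to patching the $\tilde{\fz}_i$ (thought of as a $0$-cochain valued in $\Hom(E,T)$) is the $1$-cocycle $\{\tilde{\fz}_i - \tilde{\fz}_j\}$ with values in the bundle $K$ of \emph{compact} operators $E \to T$; its class lies in $H^1(X,\mathcal{K})$ where $\mathcal{K}$ is the relevant sheaf. Now $\mathcal{K}$, fiberwise, is a space of compact operators, hence a compact perturbation of a trivial bundle (indeed of the zero bundle), so $\mathcal{K}$ itself is a compact perturbation of a trivial bundle; alternatively one works with $\Hom(E,T) = E^* \otimes B$, which is a compact perturbation of $E^*$. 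Using $H^1(X,\O) = 0$ together with Theorem \ref{thm3} and Corollary \ref{finite} (with $V$ a suitable finite rank piece, or $V = \O$ itself so that $H^1(X, V \otimes E^*)$ is finite dimensional), one concludes that the image of the relevant cohomology group is finite dimensional. Therefore, after removing a finite rank bundle (a finite-dimensional subspace of $B$, correspondingly a finite-corank subspace), the cocycle becomes a coboundary, and the corrected $\tilde{\fz}_i$ glue to a global Fredholm morphism $\tilde{\fz} : E \to X \times (B/\text{finite})$. Its kernel and the finite-dimensional complement of its image are handled by standard Fredholm bookkeeping, and dualizing/splitting yields a trivial subbundle of $E$ of finite corank.

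The main obstacle I expect is the cohomological patching step: making precise which sheaf the compact-operator-valued cocycle lives in, checking that that sheaf (or an associated bundle) is again a compact perturbation of a trivial bundle so that Corollary \ref{finite} applies, and verifying that the finite-dimensionality of the cohomology really lets one modify $E$ by only a finite rank summand rather than an uncontrolled one. One must be careful that the corrections at each stage stay holomorphic and that the resulting global morphism is fiberwise Fredholm with \emph{complemented} kernel and cokernel — complementation is automatic for Fredholm operators, but one must ensure the complement varies holomorphically, which is where local triviality of $E$ and the openness of the Fredholm and split-injective conditions are used. A secondary, more bookkeeping-type difficulty is keeping track of duals: passing between "$E$ has a trivial subbundle of finite corank" and "$E^*$ has a trivial quotient of finite corank," and making sure the finite rank error bundle is absorbed on the correct side. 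Once the global Fredholm morphism is in hand, extracting the trivial subbundle is routine linear algebra done fiberwise but globally, using that a holomorphic family of split-injective operators has holomorphically varying image and complement.
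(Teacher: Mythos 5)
Your plan --- glue the local Fredholm morphisms $\tilde{\fz}_i : E|U_i \to T|U_i$ after correcting by a compact-operator-valued cochain, then extract the subbundle from a global Fredholm map --- has genuine gaps at its central step. The obstruction cocycle $\{\tilde{\fz}_i - \tilde{\fz}_j\}$ lives in the sheaf of holomorphic sections of the bundle whose fiber is the space of compact operators $E_x \to B$. Your two proposed ways of bringing this under the paper's finiteness machinery both fail: it is not a compact perturbation of the zero bundle (by the paper's definition that would force the fibers to be finite dimensional, since a Fredholm operator into $\{0\}$ has finite-dimensional domain), and the identification $\Hom(E,T) = E^* \otimes B$ is not available for infinite-dimensional $B$ (the paper only defines $V \otimes B$ for $V$ of finite rank, and $\Hom(E,T)$, with fiber all bounded operators $E_x \to B$, is not a compact perturbation of $E^*$ in any sense --- there is no fiberwise Fredholm map between these very differently sized spaces). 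So neither Theorem \ref{thm3} nor Corollary \ref{finite} applies to the group where your obstruction lives. Worse, even if you knew that group were finite dimensional, the inference ``therefore after removing a finite rank bundle the cocycle becomes a coboundary'' is a non sequitur: finite-dimensionality of an $H^1$ gives no mechanism for killing a specific nonzero class by passing to a finite-codimensional subspace of $B$. Finally, even with a global fiberwise-Fredholm morphism $E \to X \times B'$ in hand, you would obtain $E$ as an extension involving a finite-rank kernel, and splitting off a trivial \emph{subbundle} from that data is itself a cohomological problem of the same difficulty (essentially Proposition \ref{mprop2} run in reverse), not ``routine linear algebra.''

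The paper's route is entirely different and avoids gluing the $\tilde{\fz}_i$ altogether: it works with the Banach space of global sections $\Gz(X,E)$ and shows that the evaluation map $\ez_p : \Gz(X,E) \to E_p$ is Fredholm for every $p$. This follows from $\dim H^q(X,\I_p^E) < \infty$ for $q=0,1$ (kernel and obstruction to surjectivity of $\ez_p$), which is proved by blowing up at $p$: on the blow-up, $\I_S^{\tilde E} \cong L \otimes \tilde E$ for a line bundle $L$ with $H^0(\tilde X,L)=H^1(\tilde X,L)=0$ (here is where $H^1(X,\O)=0$ enters), so Corollary \ref{finite} applies with $V=L$. A finite cover of $X$ and an intersection of finite-codimensional subspaces of $\Gz(X,E)$ then yield a subspace $A$ with $\ez_x|A$ injective, closed range, finite corank for all $x$, and $\ez A$ is the desired trivial subbundle. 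If you want to repair your argument, the missing idea is precisely this: replace the local trivializing morphisms by globally defined sections of $E$ itself, and prove Fredholmness of evaluation via the ideal sheaf $\I_p^E$ and the blow-up.
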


\begin{prop}
\label{mprop2}
Let $X$ be a compact complex manifold, $E \to X$ a holomorphic Banach bundle, and $T \st E$ a trivial subbundle of finite corank. Then $E$ has a subbundle $F$ of finite rank and $T$ has a trivial subbundle $T'$ of finite corank such that $E = T' \oplus F$.
\end{prop}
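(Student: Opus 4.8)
The plan is to analyze the short exact sequence $0 \to T \to E \to V \to 0$ of holomorphic bundles, where $V := E/T$ has finite rank, say $n$, and to show that the class of this extension only involves finitely many ``directions'' of the Banach space $B$ for which $T \cong X \times B$. Concretely, first choose a trivializing cover $\FU = \{U_i\}$ of $X$ together with trivializations $\fz_i : E|U_i \to U_i \times (B \oplus \C^n)$ that carry $T$ onto the product subbundle $U_i \times B$ with \emph{identity} transition functions; this is possible precisely because $T$ is trivial. Then the transition functions of $E$ have the block form
\[
g_{ij} = \begin{pmatrix} \id_B & \gamma_{ij} \\ 0 & \nu_{ij} \end{pmatrix},
\]
where $\nu_{ij} : U_i \cap U_j \to \text{GL}(\C^n)$ are the transition functions of $V$ and $(\gamma_{ij})$, $\gamma_{ij} : U_i \cap U_j \to \Hom(\C^n, B)$, is a holomorphic \v{C}ech $1$-cocycle of the sheaf $\Hom(V,T)$ representing a class $c \in H^1(X,\Hom(V,T))$. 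Changing the $\fz_i$ while keeping $T$ a product subbundle with identity transitions (and keeping the induced trivialization of $V$) amounts exactly to modifying $g_{ij}$ by $\bigl(\begin{smallmatrix}\id_B & \delta_i \\ 0 & \id\end{smallmatrix}\bigr)$ on the left and its inverse on the right, which replaces $(\gamma_{ij})$ by a cohomologous cocycle; conversely every coboundary is realized this way.

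The crux is to arrange that the $\gamma_{ij}$ take values in a \emph{finite-dimensional} subspace of $B$. Identify $\Hom(V,T) \cong V^* \otimes B$ (fiberwise, $\Hom(V_x,B)\cong V^*_x\otimes B$ since $\dim V_x=n$). Since $V^*$ has finite rank, Theorem \ref{thm3} gives $H^1(X, V^* \otimes B) \cong H^1(X, V^*) \otimes B$, and by the Cartan--Serre finiteness theorem \eqref{hodge1} we have $\dim H^1(X, V^*) = m < \infty$. Hence $c$ corresponds to an element $\sum_{l=1}^{m} \epsilon_l \otimes b_l$ with $\epsilon_l \in H^1(X, V^*)$, $b_l \in B$. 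Representing the $\epsilon_l$ by \v{C}ech cocycles $(\eta_{l,ij})$ on a sufficiently fine refinement, $c$ is represented there by $\bigl(\sum_l \eta_{l,ij}\otimes b_l\bigr)$, which is visibly valued in $B_0 := \operatorname{span}\{b_1,\dots,b_m\}$. Passing to a common refinement and using the correspondence of the previous paragraph between cocycle representatives and trivializations, I may assume $\FU$ is this refinement and that every $\gamma_{ij}$ maps $\C^n$ into $B_0$.

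Finally, split $B = B_0 \oplus B_1$. Because the $\gamma_{ij}$ land in $B_0$, in the ordering $(B_1, B_0, \C^n)$ the matrices $g_{ij}$ become block-diagonal, equal to $\id_{B_1} \oplus \bigl(\begin{smallmatrix}\id_{B_0} & \gamma_{ij} \\ 0 & \nu_{ij}\end{smallmatrix}\bigr)$. Therefore $E = T' \oplus F$, where $T' := X \times B_1$ is a subbundle of $T = X\times B$, trivial and of corank $\dim B_0 < \infty$ in $T$, and $F$ is the bundle glued from the transition functions $\bigl(\begin{smallmatrix}\id_{B_0} & \gamma_{ij} \\ 0 & \nu_{ij}\end{smallmatrix}\bigr)$, which has rank $\dim B_0 + n < \infty$. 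This is the required decomposition.

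I expect the main obstacle to be the bookkeeping in the second paragraph: checking that a change of \v{C}ech representative of $c$ by a coboundary corresponds precisely to an admissible change of the $\fz_i$ (one that fixes $T$ as the product subbundle with identity transitions and fixes the trivialization of $V$), and that the isomorphism of Theorem \ref{thm3} is compatible with \v{C}ech representatives, so that $\bigl(\sum_l \eta_{l,ij}\otimes b_l\bigr)$ genuinely represents $c$ after passing to a common refinement. Everything else is manipulation with block matrices. Note that $H^1(X,\O)=0$ is not used anywhere here; what matters is the finiteness of $\dim H^1(X,V^*)$, which is available because $V$ has finite rank.
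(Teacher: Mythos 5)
Your argument is correct and is essentially the paper's own proof in the language of transition matrices: both identify the extension class of $0 \to T \to E \to E/T \to 0$ as an element of $H^{1}(X,\Hom(E/T,T)) \cong H^{1}(X,(E/T)^{*}) \otimes B$ via Theorem \ref{thm3}, take $B_{0}$ (the paper's $A$) to be the span of the finitely many vectors $b_{l}$ appearing there, and conclude that the extension splits modulo $X \times B_{0}$. The paper packages the cocycle bookkeeping you worry about into the long exact sequence for $0 \to \Hom(G,T/S) \to \Hom(G,E/S) \to \Hom(G,G) \to 0$ and lifts the identity section, which avoids checking by hand that coboundaries correspond to admissible changes of trivialization; otherwise the two arguments coincide.
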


We start with the proof of Proposition \ref{mprop2}.

\begin{proof}[Proof of Proposition \ref{mprop2}]
Let $T$ be the bundle $X \times B \to X$ with $B$ a Banach space, and write $G = E / T$. With $A \st B$ a closed subspace to be specified and $S \st T$ the subbundle $X \times A \to X$, the quotient map $E \to E/S$ induces a homomorphism of short exact sequences of Banach bundles
\[
\begin{CD}
0 @>>> T @>>> E @>>> G @>>> 0 \phantom{,}\\
@. @VVV @VVV @VV \id V @. \\
0 @>>> T/S @>>> E/S @>>> G @>>> 0 \text{,}
\end{CD}
\]
and also of
\begin{align}
\label{soc0}
\begin{CD}
0 @>>> \Hom(G,T) @>>> \Hom(G,E) @>>> \Hom(G,G) @>>> 0 \phantom{.} \\
@.               @VVV           @VVV           @VV \id V   @.\\
0 @>>> \Hom(G,T/S) @>>> \Hom(G,E/S) @>>> \Hom(G,G) @>>> 0 \text{.}
\end{CD}
\end{align}
Let $G^{*}$ be the dual bundle of $G$. Since $\Hom(G,T) \cong G^{*} \otimes B$ and $\Hom(G,T/S) \cong G^{*} \otimes B/A$, Theorem \ref{thm3} gives us a diagram
\begin{align}
\label{soc2}
\begin{CD}
H^{1}(X,\Hom(G,T)) @> \cong >> H^{1}(X,G^{*}) \otimes B \phantom{.} \\
@V \pi VV                       @VVV \\
H^{1}(X,\Hom(G,T/S)) @> \cong >> H^{1}(X,G^{*}) \otimes B/A \text{.}
\end{CD}
\end{align}
With the right vertical arrow induced by the quotient map $B \to B/A$, the diagram \eqref{soc2} is commutative since the embedding $\frak{i}$ in \eqref{emb1} is functorial in $B$. From \eqref{soc0}, portions of the associated exact sequences in cohomology give a commutative diagram
\begin{align}
\label{soc1}
\begin{CD}
@. \Gz(X,\Hom(G,G)) @> \az >> H^{1}(X,\Hom(G,T)) \phantom{.} \\
@.                  @V \id VV   @VV \pi V \\
\Gz(X,\Hom(G,E/S)) @> \bz >> \Gz(X,\Hom(G,G)) @> \gz >> H^{1}(X,\Hom(G,T/S)) \text{.}
\end{CD}
\end{align}
Consider the section $h$ of $\Hom(G,G)$ corresponding to the identity homomorphism $G \to G$. Its image $\az(h)$ in $H^{1}(X,\Hom(G,T)) \cong H^{1}(X,G^{*}) \otimes B$ can be written $\az(h) = \sum_{1}^{m}g_{i} \otimes b_{i}$, $g_{i} \in H^{1}(X,G^{*})$, $b_{i} \in B$. If we choose $A \st B$ to be the span of the $b_{i}$, then \eqref{soc1} shows that $0 = \pi \left( \az (h) \right) = \gz (h)$, and so $h$ is in the range of $\bz$. In other words, the identity homomorphism $G \to G$ is covered by a homomorphism $G \to E/S$; the image $G' \st E/S$ of this latter has then finite rank and is complementary to $T/S \st E/S$. If we now choose $F \st E$ to be the preimage of $G'$ under the quotient map $E \to E/S$, and $T' \st T$ to be complementary to $S \st T$, then $E = T' \oplus F$, as claimed.
\end{proof}

The following lemma is not new; for lack of reference we include a proof.

\begin{lem}
\label{bu}
Let $\mu : L \to \P_{n}$ be a negative line bundle, $h : L \to \R$ a negatively curved hermitian metric, and $D=\{ h < 1 \}$. Then $H^{q}(D, \O) = 0$ for $q > 0$.
\end{lem}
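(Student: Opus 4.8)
The plan is to realize $D=\{h<1\}$ as a strongly pseudoconvex domain inside the total space of the dual bundle $L^*$ and apply the Andreotti--Grauert vanishing theorem for $q$-convex domains, or alternatively to exploit that $D$ is a disc bundle over $\P_n$ and use the Leray spectral sequence together with the known cohomology of powers of $L$. I would take the second route, since it is more self-contained and matches the concrete setup of the lemma.

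\medskip
\noindent\textbf{Step 1: $D$ is a holomorphically convex disc bundle.} Since $h$ is a negatively curved metric on the negative line bundle $\mu:L\to\P_n$, the function $-\log h$ is strictly plurisubharmonic on $L$ away from the zero section, and $h$ itself is a smooth exhaustion of $L$ whose sublevel sets $\{h<c\}$ are relatively compact; in fact $-\log h$ is strictly plurisubharmonic on all of $L\setminus(\text{zero section})$ and the zero section has a Stein neighborhood, so $D$ is a strongly pseudoconvex (hence holomorphically convex) manifold whose maximal compact analytic subset is the zero section $\P_n\subset D$. Equivalently, fiberwise $D$ is the unit disc bundle of $(L,h)$.

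\medskip
\noindent\textbf{Step 2: Reduce to cohomology on $\P_n$ via the direct image.} Let $\nu:D\to\P_n$ be the restriction of $\mu$. The fibers of $\nu$ are discs, so $R^j\nu_*\O_D=0$ for $j>0$, and $\nu_*\O_D$ is the sheaf whose sections over an open $U\subset\P_n$ are holomorphic functions on $\nu^{-1}(U)$. Expanding such a function in a Taylor series along the disc fibers identifies $\nu_*\O_D$ with $\bigoplus_{k\ge 0}\O(L^{*\otimes k})=\bigoplus_{k\ge 0}\O(\mu^k)^*$ — more precisely with the sheaf of holomorphic sections of $\widehat{\bigoplus}_k (L^*)^{\otimes k}$, convergent in the disc. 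By the Leray spectral sequence, $H^q(D,\O_D)\cong H^q(\P_n,\nu_*\O_D)$, and since cohomology on the compact manifold $\P_n$ commutes with the (locally finite, in each fixed degree) direct sum in the relevant sense, this is $\bigoplus_{k\ge 0}H^q(\P_n,\O(L^{*\otimes k}))$ (one must justify passing the direct limit/sum through $H^q$, e.g. by exhausting $D$ by $\{h<c\}$, $c<1$, where the sum is genuinely a convergent power series and the estimates are uniform).

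\medskip
\noindent\textbf{Step 3: Vanishing on $\P_n$.} Because $L$ is negative, $L^*$ is positive, i.e. ample, so $L^{*\otimes k}$ is ample for every $k\ge 0$ (for $k=0$ it is trivial). Kodaira's vanishing theorem, or simply the explicit computation $H^q(\P_n,\O(d))=0$ for $q>0$ and $d\ge 0$ once one identifies $L^*$ with $\O(d)$ for some $d\ge 1$, gives $H^q(\P_n,\O(L^{*\otimes k}))=0$ for all $q>0$ and all $k\ge0$. Summing over $k$ yields $H^q(D,\O_D)=0$ for $q>0$, as claimed.

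\medskip
\noindent\textbf{Main obstacle.} The routine geometry (Steps 1 and 3) is standard; the point requiring care is Step 2, namely the commutation of $H^q$ with the infinite direct sum $\bigoplus_k$ arising from the fiberwise Taylor expansion. The clean way to handle this is to first prove the vanishing on each relatively compact sublevel domain $D_c=\{h<c\}$ with $c<1$ — where $\nu_*\O_{D_c}$ is honestly the sheaf of convergent power series $\sum_k s_k$, $s_k\in\O(L^{*\otimes k})$, with geometric-type growth — run the argument there using Kodaira vanishing termwise plus uniform Cauchy estimates to see the cohomology vanishes, and then pass to the limit $c\to 1$ using that $D=\bigcup_{c<1}D_c$ is an increasing union of such domains (invoking, if one wishes, the continuity of Dolbeault cohomology under such exhaustions, or solving $\pz$ with estimates directly on $D$). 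I expect this limiting/estimate bookkeeping to be the only genuinely technical part of the proof.
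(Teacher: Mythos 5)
Your core idea is the same as the paper's: expand cohomology classes on $D$ into fiberwise homogeneous components, identify the $k$-th component with a class in $H^{q}(\P_{n},L^{-k})$, and kill each one by Kodaira vanishing since $L^{-k}=(L^{*})^{\otimes k}$ is semipositive/ample. You also correctly locate the real difficulty: passing from termwise vanishing to vanishing of the full class. But that step is exactly where your proposal has a gap, and your suggested fixes do not close it. First, ``uniform Cauchy estimates'' are not enough by themselves: to sum the primitives you would need to solve $\dz g_{k}=f_{k}$ with a bound $\norm{g_{k}}\le C\norm{f_{k}}$ where $C$ is \emph{uniform in $k$}, even though the coefficient bundle $L^{-k}$ changes with $k$; such uniformity is not automatic and is not argued. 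Second, the exhaustion $D=\bigcup_{c<1}D_{c}$ does not by itself give $H^{q}(D,\O)=0$ from $H^{q}(D_{c},\O)=0$: cohomology does not commute with increasing unions in general (there is a $\varprojlim^{1}$ obstruction, and for $q=1$ one needs a Runge-type density of restrictions $H^{0}(D_{c'},\O)\to H^{0}(D_{c},\O)$, which you would still have to prove). Similarly, in Step 2 the identification of $H^{q}(\P_{n},\nu_{*}\O_{D})$ with $\bigoplus_{k}H^{q}(\P_{n},L^{-k})$ involves a \emph{completed} sum of different finite-rank bundles, which is not covered by the usual ``cohomology commutes with direct sums'' statement nor by Leiterer-type results for $V\otimes B$ with $V$ of fixed finite rank.

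The paper sidesteps all of this with one observation you did not make: since $D$ is strongly pseudoconvex, Grauert's finiteness theorem gives $\dim H^{q}(D,\O)<\infty$, hence $H^{q}(D,\O)$ is Hausdorff in its natural topology. One then only needs that the partial sums $\sum_{k\le m}f_{k}$ converge to $f$ (which \emph{is} an elementary Cauchy estimate on the disc fibers), so that $[f]=\lim_{m}\sum_{k\le m}[f_{k}]=\lim_{m}0=0$ in a Hausdorff space. No primitives are summed and no commutation of cohomology with infinite sums or unions is needed. I recommend you replace your Step 2 machinery with this finiteness/Hausdorffness argument; as written, your proof is incomplete at its central step.
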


\begin{proof}
Let $S^{1} = \S$, then $S^{1}$ acts continuously on $L$ by
\[\rho_{t}(v) = e^{2 \pi i t} v \text{ , } t \in S^{1} \text{ , } v \in L \text{.} \]
Let $\FU =\{U_{i} \}$ be a Stein cover of $\P_{n}$, then $\FV =\{V_{i}=D \cap \mu^{-1}U_{i} \}$ is a Stein cover of $D$. For cocycle $f \in Z^{q}(\FV, \O)$, set
\[f_{k} = \int_{S^{1}} e^{- 2 \pi i k t} \rho_{t}^{*}f dt \text{, for } k \ge 0 \text{,} \]
so that $f = \sum_{k=0}^{\infty} f_{k}$ is the fiberwise homogeneous expansion of $f$. Now the $k$-homogeneous cocycle $f_{k}$ can be thought of as a cocyle in $Z^{q}(\FU, L^{-k})$, and therefore it is a coboundary (for example by Kodaira's vanishing theorem). Therefore $[f] \in H^{q}(\FV, \O)$ is the limit of the classes $\sum_{k=0}^{m}[f_{k}] = 0 \in H^{q}(\FV, \O)$. On the other hand, the condition on $h$ means that $D$ has strictly pseudoconvex boundary, hence by Grauert's theorem (see \cite{Ga}), $H^{q}(D,\O) \cong H^{q}(\FV, \O)$ is finite dimensional and so Hausdorff. Therefore $[f]=0$ as claimed.
\end{proof}

\begin{lem}
\label{bux}
Let $E \to X$ be a holomorphic Banach bundle over a compact complex manifold, $\pi : \tilde{X} \to X$ the blow up at $p \in X$, and $\tilde{E}=\pi^{*}E \to \tilde{X}$ the pull back bundle. If $\I_{p}^{E}$ and $\I_{S}^{\tilde{E}}$ are the ideal sheaves of germs of $E$- (respectively $\tilde{E}$-) valued holomorphic functions vanishing at $p$ (respectively $S=\pi^{-1}(p)$), then
\[\pi^{*} : H^{q}(X, E) \to H^{q}(\tilde{X}, \tilde{E}) \text{ and } \pi^{*}_{p} : H^{q}(X, \I_{p}^{E}) \to H^{q}(\tilde{X}, \I_{S}^{\tilde{E}})\]
are isomorphisms for $q = 0$ and monomorphisms for $q = 1$; if $E$ is of finite rank, then they are isomorphisms for any $q \ge 0$.
\end{lem}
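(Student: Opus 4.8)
The plan is to transfer cohomology from $\tilde X$ down to $X$ along $\pi$, exploiting that $\pi$ is a biholomorphism over $X\setminus\{p\}$ and contracts the exceptional divisor $S=\pi^{-1}(p)\cong\P_{n-1}$ to the point $p$; the bookkeeping is done with the Leray spectral sequence. We may assume $n=\dim X\ge 2$, since for $n=1$ the map $\pi$ is an isomorphism and there is nothing to prove. The first step is to establish that, as sheaves on $X$,
\[
\pi_{*}\O(\tilde E)=\O(E)\quad\text{and}\quad\pi_{*}\I_{S}^{\tilde E}=\I_{p}^{E}.
\]
Over $X\setminus\{p\}$ this is clear because $\pi$ restricts to a biholomorphism there. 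Near $p$, shrink to a coordinate ball $U$ on which $E$ is trivial, $E|U\cong U\times B$ with $B$ the model fiber; then $\tilde E$ is trivial over $\pi^{-1}(U)$, and a holomorphic $B$-valued function on $\pi^{-1}(U)$ restricts to a holomorphic $B$-valued function on $\pi^{-1}(U)\setminus S\cong U\setminus\{p\}$. Since $\dim U=n\ge 2$, this extends holomorphically across $p$ by the (Banach-space valued) Hartogs phenomenon; the extension, pulled back, agrees with the original section on the dense open set $\pi^{-1}(U)\setminus S$, hence everywhere, so every section of $\tilde E$ over $\pi^{-1}(U)$ is $\pi^{*}$ of a section of $E$ over $U$. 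Such a section is moreover constant along $S$, with value its value at $p$, which yields the statement for the ideal sheaves at the same time.

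Given this, the case $q=0$ is immediate: $H^{0}(\tilde X,\tilde E)=H^{0}(X,\pi_{*}\O(\tilde E))=H^{0}(X,E)$ and the identification is exactly $\pi^{*}$, and likewise $H^{0}(\tilde X,\I_{S}^{\tilde E})=H^{0}(X,\I_{p}^{E})$ via $\pi^{*}_{p}$. For $q=1$ I would invoke the five-term exact sequence of the Leray spectral sequence $H^{p}(X,R^{r}\pi_{*}\mathcal F)\Rightarrow H^{p+r}(\tilde X,\mathcal F)$, available for any sheaf of abelian groups $\mathcal F$ on $\tilde X$. With $\mathcal F=\O(\tilde E)$ it begins
\[
0\to H^{1}(X,\pi_{*}\O(\tilde E))\to H^{1}(\tilde X,\tilde E)\to H^{0}(X,R^{1}\pi_{*}\O(\tilde E)).
\]
By the first step the left group is $H^{1}(X,E)$ and the first arrow is the edge homomorphism; since $\pi^{*}$ on $H^{1}$ factors as the canonical map $H^{1}(X,\O(E))\to H^{1}(X,\pi_{*}\pi^{*}\O(E))$ — an isomorphism by the first step — followed by that edge homomorphism, it follows that $\pi^{*}:H^{1}(X,E)\to H^{1}(\tilde X,\tilde E)$ is injective. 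Repeating the argument with $\I_{S}^{\tilde E}$ in place of $\O(\tilde E)$, using $\pi_{*}\I_{S}^{\tilde E}=\I_{p}^{E}$, shows $\pi^{*}_{p}$ is injective on $H^{1}$.

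When $E$ has finite rank I would show in addition that $R^{r}\pi_{*}\O(\tilde E)=0$ for $r>0$, so that the spectral sequence collapses and $\pi^{*}:H^{q}(X,E)\to H^{q}(\tilde X,\tilde E)$ is an isomorphism in every degree. Away from $p$ the stalk of $R^{r}\pi_{*}\O(\tilde E)$ vanishes since $\pi$ is a local biholomorphism; its stalk at $p$ is $\varinjlim_{U}H^{r}(\pi^{-1}(U),\tilde E)$, and after shrinking $U$ to a small ball on which $E$ is trivial, $\pi^{-1}(U)$ is biholomorphic to a sublevel set $\{h<1\}$ in the total space of the tautological negative line bundle $\O_{\P_{n-1}}(-1)$ with a negatively curved metric $h$, so $H^{r}(\pi^{-1}(U),\O)=0$ for $r>0$ by Lemma \ref{bu} and hence $H^{r}(\pi^{-1}(U),\tilde E)=H^{r}(\pi^{-1}(U),\O)^{\oplus\rank E}=0$. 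For the ideal sheaf one applies $R\pi_{*}$ to $0\to\I_{S}^{\tilde E}\to\O(\tilde E)\to\mathcal Q\to 0$, where $\mathcal Q$ is the pushforward to $\tilde X$ of the sheaf of sections of $\tilde E|_{S}$ (which is trivial, $\cong S\times E_{p}$); then $R^{r}\pi_{*}\mathcal Q=H^{r}(\P_{n-1},\O)\otimes E_{p}$, a skyscraper at $p$ that vanishes for $r>0$, and since $\O(E)\to E_{p}$ is surjective the long exact sequence gives $R^{r}\pi_{*}\I_{S}^{\tilde E}=0$ for $r>0$; the corresponding collapse shows $\pi^{*}_{p}$ is an isomorphism in all degrees.

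The main obstacle is conceptual rather than computational. For infinite rank neither the projection formula nor the vanishing of $R^{1}\pi_{*}$ is available, so in degree one a monomorphism is the best assertion possible, and the natural tool is the Leray five-term sequence combined with the identification $\pi_{*}\pi^{*}\O(E)=\O(E)$ — which is exactly where the Banach-valued Hartogs phenomenon enters. The remaining points are routine: $\pi^{*}$ is continuous by Cauchy estimates, and for finite-rank $E$ the spaces $H^{q}(X,E)$ and $H^{q}(\tilde X,\tilde E)$ are finite dimensional, so the continuous bijections obtained above are automatically topological isomorphisms.
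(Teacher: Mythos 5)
Your proof is correct, and it rests on exactly the same two local facts as the paper's: the Banach-valued Hartogs extension across $p$ (which you phrase as $\pi_{*}\O(\tilde{E})=\O(E)$ and $\pi_{*}\I_{S}^{\tilde{E}}=\I_{p}^{E}$, and the paper phrases as the bijectivity of $\pi^{*}:\Gz(V,E)\to\Gz(U,\tilde{E})$ for a small ball $V$ around $p$), and the acyclicity of the disc bundle $\pi^{-1}(V)\cong\{h<1\}$ furnished by Lemma \ref{bu} together with the cohomology of $\O_{\P_{n-1}}$ along the exceptional divisor. Where you genuinely differ is the global bookkeeping: the paper chooses a Stein cover $\FW$ of $X$ whose only member containing $p$ is $V$, pulls it back to $\tilde{X}$, and uses that for an arbitrary cover the canonical map $H^{q}(\FU,\cdot)\to H^{q}(\tilde{X},\cdot)$ is bijective for $q=0$ and injective for $q=1$, upgrading to Leray's theorem in the finite-rank case once $U=\pi^{-1}V$ is shown to be acyclic; you instead run the Leray spectral sequence of the map $\pi$, extracting the degree-one monomorphism from the five-term exact sequence and the isomorphisms in all degrees from the vanishing of $R^{r}\pi_{*}$ for $r>0$. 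The two devices are interchangeable here, and your treatment of the ideal sheaf in the finite-rank case --- applying $R\pi_{*}$ to $0\to\I_{S}^{\tilde{E}}\to\O(\tilde{E})\to\mathcal{Q}\to 0$ and using surjectivity of evaluation at $p$ --- is a direct translation of the paper's exact sequence \eqref{Svanish}. What your route buys is a cleaner degree-one argument (no cover adapted to $p$ is needed); its small extra cost is that you must identify $\pi^{*}$ with the edge homomorphism through the unit $\O(E)\to\pi_{*}\pi^{*}\O(E)$, a point you correctly isolate and which is legitimate precisely because of the Hartogs step.
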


\begin{proof}
Let $\dim X = n \ge 2$ and $\rank E = r \le \infty$. (If $\dim X = 1$, the Lemma is obvious.) The point $p$ has a arbitrarily small neighborhood $V \st X$, biholomorphic to a ball, such that $U = \pi^{-1}V \st \tilde{X}$ is biholomorphic to the disc bundle $\{h < 1\}$ of a hermitian line bundle $(L,h)$ over $S \cong \P_{n-1}$. Choose $V$ so that, in addition, $E|V$ is trivial. Then $\tilde{E}|U$ is also trivial. Consider the following commutative diagram
\begin{align}
\label{cdiagram}
\begin{CD}
\Gz(V, \I_{p}^{E})         @>i>> \Gz(V, E) \phantom{,}\\
@V\pi^{*}_{p}VV                   @VV\pi^{*}V \\
\Gz(U, \I_{S}^{\tilde{E}}) @>\tilde{i}>> \Gz(U, \tilde{E}) \text{,}
\end{CD}
\end{align}
where $i$ and $\tilde{i}$ are inclusion maps and $\pi^{*}_{p}$ is the restriction of $\pi^{*}$. Both $\pi^{*}$ and $\pi^{*}_{p}$ are isomorphisms. The inverse of $\pi^{*}$ is obtained by first associating with $f \in \Gz(U,\tilde{E})$ the section
\[ \left(\pi^{-1}|V \backslash \{p\}\right)^{*} f \in \Gz(V \backslash \{p\}, E)\]
and then extending this section, by Hartogs' theorem, to $p$. The restriction of this inverse to $\Gz(U, \I_{S}^{\tilde{E}})$ is then the inverse of $\pi^{*}_{p}$. Next let $\FW$ be a Stein cover of $X$ so that $V \in \FW$ and no $W \in \FW \backslash \{V\}$ contains $p$. Let $\FU = \{ \pi^{-1}W | W \in \FW \}$. That $\pi^{*}$ and $\pi^{*}_{p}$ in \eqref{cdiagram} are isomorphisms implies that
\begin{align}
\label{buiso1}
H^{q}(\FW,E) \cong H^{q}(\FU, \tilde{E}) \text{ and } H^{q}(\FW,\I_{p}^{E}) \cong H^{q}(\FU, \I_{S}^{\tilde{E}}) \text{.}
\end{align}
Because $\FW$ is Stein, by Leray's theorem,
\begin{align}
\label{buiso2}
H^{q}(\FW,E) \cong H^{q}(X, E) \text{ and } H^{q}(\FW,\I_{p}^{E}) \cong H^{q}(E, \I_{p}^{E}) \text{.}
\end{align}
Since the canonical maps
\begin{align}
\label{buinj}
H^{q}(\FU,\tilde{E}) \to H^{q}(\tilde{X},\tilde{E}) \text{ and }  H^{q}(\FU, \I_{S}^{\tilde{E}}) \to H^{q}(\tilde{X}, \I_{S}^{\tilde{E}})
\end{align}
are isomorphisms for $q = 0$ and monomorphisms for $q = 1$, by combining \eqref{buiso1}, \eqref{buiso2}, and \eqref{buinj},
\[\pi^{*} : H^{q}(X, E) \to H^{q}(\tilde{X}, \tilde{E}) \text{ and } \pi^{*}_{p} : H^{q}(X, \I_{p}^{E}) \to H^{q}(\tilde{X}, \I_{S}^{\tilde{E}})\]
are isomorphisms for $q = 0$ and monomorphisms for $q = 1$. \\
\indent Now let us assume $\rank E = r < \infty$. For $j \ge 1$
\begin{align}
\label{nonstein}
H^{j}(V,E)=H^{j}(V,\I_{p}^{E})=H^{j}(U,\tilde{E})=H^{j}(U,\I_{S}^{\tilde{E}})=0 \text{.}
\end{align}
The first two groups vanish simply because $V$ is Stein, and the third vanishes by Lemma \ref{bu}, since $\tilde{E}|U$ is trivial. The last group fits in the exact sequence
\begin{align}
\label{Svanish}
\begin{CD}
H^{j-1}(U,\tilde{E})@>\az>>H^{j-1}(U,\tilde{E}/\I_{S}^{\tilde{E}})@>\bz>>H^{j}(U,\I_{S}^{\tilde{E}}) @>>>H^{j}(U,\tilde{E}) = 0 \text{.}
\end{CD}
\end{align}
Since $\tilde{E}|U$ is trivial,
\[H^{j-1}(U,\tilde{E}/\I_{S}^{\tilde{E}}) \cong H^{j-1}(S,\O^{\oplus r}) \cong H^{j-1}(S,\O)\otimes \C^{r} \text{.}\]
Thus $H^{0}(U,\tilde{E}/\I_{S}^{\tilde{E}})$ is identified with the space of constant $\C^{r}$-valued functions over the compact $S$, and $\az$ is clearly surjective for $j=1$. For $j>1$, $H^{j-1}(S,\O)=0$ by Kodaira's vanishing theorem. Therefore $\bz=0$ and $H^{j}(U,\I_{S}^{\tilde{E}})=0$ by \eqref{Svanish}. Although $\FU$ is not a Stein cover, Leray's theorem still implies
\begin{align}
\label{buiso3}
H^{q}(\FU,\tilde{E}) \cong H^{q}(X, \tilde{E}) \text{ and } H^{q}(\FU,\I_{S}^{\tilde{E}}) \cong H^{q}(E, \I_{S}^{\tilde{E}}) \text{,}
\end{align}
because the only non-Stein intersection of elements of $\FU$ is $U$ itself, which is acyclic according to \eqref{nonstein}. Putting together \eqref{buiso1}, \eqref{buiso2}, and \eqref{buiso3},
\[H^{q}(X, E) \cong H^{q}(\tilde{X}, \tilde{E}) \text{ and } H^{q}(X, \I_{p}^{E}) \cong H^{q}(\tilde{X}, \I_{S}^{\tilde{E}}) \text{.}\]
\end{proof}

\begin{lem}
\label{dimp}
Let $X$ be a compact complex manifold with $H^{1}(X, \O) = 0$, and $E \to X$ a compact perturbation of a trivial Banach bundle. If $\I_{p}^{E}$ is the ideal sheaf of germs of $E$-valued holomorphic functions vanishing at $p \in X$, then for $q=0,1$
\[\dim H^{q}(X, \I_{p}^{E}) < \infty \text{.}\]
\end{lem}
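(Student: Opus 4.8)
The plan is to pull everything back to the blow-up $\pi:\tilde X\to X$ at $p$ introduced in Lemma \ref{bux} (if $\dim X=1$ we take $\pi=\id$ and $S=\{p\}$; Lemma \ref{bux} still applies), where the ideal sheaf in question turns into a genuine holomorphic Banach bundle to which Corollary \ref{finite} applies. Let $T=X\times B$ be the trivial bundle of which $E$ is a compact perturbation. Then the pull-back $\tilde E=\pi^{*}E$ is a compact perturbation of the trivial bundle $\tilde T=\pi^{*}T=\tilde X\times B$, since the defining Fredholm homomorphisms and their compact differences pull back directly. Moreover, as $S\st\tilde X$ is a smooth divisor, its ideal sheaf is the line bundle $\O_{\tilde X}(-S)$, and a local computation identifies $\I_{S}^{\tilde E}$ with $\tilde E\otimes\O_{\tilde X}(-S)$; this is exactly the bundle $V\otimes\tilde E$ of Corollary \ref{finite} with the finite rank bundle $V=\O_{\tilde X}(-S)$.

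Next I would check that $H^{q}(\tilde X,\O_{\tilde X}(-S))=0$ for $q=0,1$. For $q=0$ this is immediate: a global holomorphic function on the compact connected manifold $\tilde X$ vanishing on the nonempty set $S$ is identically zero. For $q=1$ I would use the exact sequence $0\to\O_{\tilde X}(-S)\to\O_{\tilde X}\to\O_{S}\to0$. In the induced cohomology sequence the restriction $H^{0}(\tilde X,\O_{\tilde X})\to H^{0}(S,\O_{S})$ is the identity map of $\C$ (both sides are the constants, $S$ being connected), hence surjective; and $H^{1}(\tilde X,\O_{\tilde X})\cong H^{1}(X,\O)=0$ by Lemma \ref{bux} applied to the finite rank bundle $\O$, together with the hypothesis $H^{1}(X,\O)=0$. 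Hence $H^{1}(\tilde X,\O_{\tilde X}(-S))$ embeds into $H^{1}(\tilde X,\O_{\tilde X})=0$ and so vanishes.

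With this vanishing in hand, Corollary \ref{finite} — applied with $V=\O_{\tilde X}(-S)$ and the bundle $\tilde E$, which is a compact perturbation of the trivial bundle $\tilde T$ — yields $\dim H^{q}(\tilde X,\I_{S}^{\tilde E})=\dim H^{q}(\tilde X,\O_{\tilde X}(-S)\otimes\tilde E)<\infty$ for $q=0,1$. Finally, Lemma \ref{bux} asserts that $\pi^{*}_{p}:H^{q}(X,\I_{p}^{E})\to H^{q}(\tilde X,\I_{S}^{\tilde E})$ is an isomorphism for $q=0$ and a monomorphism for $q=1$, so $\dim H^{q}(X,\I_{p}^{E})\le\dim H^{q}(\tilde X,\I_{S}^{\tilde E})<\infty$, as claimed.

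The part requiring the most care is the first paragraph: confirming that $\I_{S}^{\tilde E}$ really is the holomorphic Banach bundle $\tilde E\otimes\O_{\tilde X}(-S)$, and that pulling back (and tensoring with a line bundle) preserve the property of being a compact perturbation of a trivial bundle, so that Corollary \ref{finite} is legitimately applicable. Once that is set up, the whole cohomological content is carried by $H^{1}(\tilde X,\O)=0$, which is precisely where the hypothesis $H^{1}(X,\O)=0$ enters.
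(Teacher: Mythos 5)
Your proof is correct and follows essentially the same route as the paper: blow up at $p$, identify $\I_{S}^{\tilde E}$ with $L\otimes\tilde E$ for the line bundle $L=\O_{\tilde X}(-S)$, verify $H^{q}(\tilde X,L)=0$ for $q=0,1$, apply Corollary \ref{finite}, and descend via Lemma \ref{bux}. The only (immaterial) difference is that you establish the vanishing of $H^{1}(\tilde X,L)$ upstairs via $0\to\O_{\tilde X}(-S)\to\O_{\tilde X}\to\O_{S}\to0$ and $H^{1}(\tilde X,\O)\cong H^{1}(X,\O)=0$, whereas the paper computes $H^{1}(X,\I_{p})=0$ downstairs and transfers it with the finite-rank case of Lemma \ref{bux}.
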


\begin{proof}
Let $\pi : \tilde{X} \to X$ be the blow up at $p \in X$ and $S=\pi^{-1}(p)$, $\tilde{E}$, and $\I_{S}^{\tilde{E}}$ as in Lemma \ref{bux}. Further, let $\I_{p}$ be the ideal sheaf of $p \in X$ and $\I_{S}$ the ideal sheaf of $S \st \tilde{X}$. $\I_{S}$ is isomorphic to the sheaf of sections of a line bundle $L \to \tilde{X}$. By Lemma \ref{bux} $H^{q}(\tilde{X},L) \cong H^{q}(\tilde{X}, \I_{S}) \cong H^{q}(X,\I_{p})$. This latter group vanishes for $q=0$ by the maximum principle, but also for $q=1$, as follows from the exact sequence
\[
\begin{CD}
H^{0}(X,\O) @> \az >> H^{0}(X,\O/\I_{p}) @>>> H^{1}(X,\I_{p}) @>>> H^{1}(X,\O) = 0 \text{,}
\end{CD}
\]
where $\az$ is surjective. Since $\tilde{E}$ is a compact perturbation of a trivial Banach bundle, Corollary \ref{finite} implies $H^{q}(\tilde{X},\I_{S}^{\tilde{E}}) \cong H^{q}(\tilde{X},L \otimes \tilde{E})$ is finite dimensional for $q = 0, 1$, and by Lemma \ref{bux} so is $H^{q}(X, \I_{p}^{E})$.
\end{proof}

We prove Proposition \ref{mprop1} through these Lemmas. With the Banach space $\Gz(X,E)$ and given $p \in X$, define homomorphisms
\begin{align}
&\ez : X \times \Gz(X,E) \ni (x,s) \mapsto s(x) \in E \text{,} \nonumber \\
&\ez_{p} : \Gz(X,E) \ni s \mapsto s(p) \in E_{p} \text{.} \nonumber
\end{align}

\begin{lem}
\label{mlem1}
If $X$ is a compact complex manifold with $H^{1}(X, \O)=0$ and $E \to X$ a compact perturbation of a trivial bundle, then $\ez_{p}$ is Fredholm for every $p \in X$.
\end{lem}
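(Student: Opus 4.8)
The plan is to show that the homomorphism $\ez_p : \Gz(X,E) \to E_p$ has finite-dimensional kernel and finite-dimensional cokernel, i.e. its image has finite codimension in the fiber $E_p$. The kernel of $\ez_p$ is precisely $H^0(X, \I_p^E)$, the space of global holomorphic sections of $E$ vanishing at $p$. By Lemma \ref{dimp} (applicable since $X$ is compact with $H^1(X,\O)=0$ and $E$ is a compact perturbation of a trivial bundle), $\dim H^0(X, \I_p^E) < \infty$, so $\Ker \ez_p$ is finite dimensional.

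For the cokernel, the natural exact sequence of sheaves
\[
0 \to \I_p^E \to \O(E) \to \O(E)/\I_p^E \to 0
\]
gives, upon taking cohomology, the exact sequence
\[
\begin{CD}
\Gz(X,E) @> \ez_p >> E_p @>>> H^1(X,\I_p^E) \text{,}
\end{CD}
\]
since $H^0(X,\O(E)/\I_p^E) = E_p$ and the map $\Gz(X,E) \to E_p$ induced by the quotient is exactly $\ez_p$. Hence $\mathrm{coker}\,\ez_p$ embeds into $H^1(X,\I_p^E)$, which is finite dimensional by Lemma \ref{dimp}. This shows the image of $\ez_p$ has finite codimension in $E_p$, and combined with the finite-dimensionality of the kernel, $\ez_p$ is Fredholm. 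The only point requiring a small amount of care is verifying that the connecting map in the long exact sequence agrees with $\ez_p$ as defined, and that $H^0(X,\O(E)/\I_p^E) \cong E_p$ canonically; both are immediate from the definitions, since $\O(E)/\I_p^E$ is the skyscraper sheaf at $p$ with stalk $E_p$.

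Since neither step involves any serious obstacle — Lemma \ref{dimp} does all the heavy lifting — the proof is short. If there is a subtle point, it is ensuring Lemma \ref{dimp} is being invoked with the correct hypotheses, which it is. Therefore the argument proceeds directly: identify $\Ker \ez_p$ and $\mathrm{coker}\,\ez_p$ with (subspaces of) the cohomology groups $H^0(X,\I_p^E)$ and $H^1(X,\I_p^E)$ respectively, then quote Lemma \ref{dimp}.

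\begin{proof}
The kernel of $\ez_{p}$ consists of those global holomorphic sections of $E$ that vanish at $p$, so $\Ker \ez_{p} = H^{0}(X, \I_{p}^{E})$, which is finite dimensional by Lemma \ref{dimp}. The short exact sequence of sheaves
\[
0 \to \I_{p}^{E} \to \O(E) \to \O(E)/\I_{p}^{E} \to 0
\]
induces the exact cohomology sequence
\[
\begin{CD}
\Gz(X,E) @>>> H^{0}(X, \O(E)/\I_{p}^{E}) @>>> H^{1}(X, \I_{p}^{E}) \text{.}
\end{CD}
\]
Since $\O(E)/\I_{p}^{E}$ is the skyscraper sheaf at $p$ with stalk $E_{p}$, we have $H^{0}(X, \O(E)/\I_{p}^{E}) \cong E_{p}$, and under this identification the first map above is $\ez_{p}$. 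Hence $\mathrm{coker}\, \ez_{p}$ embeds into $H^{1}(X, \I_{p}^{E})$, which is finite dimensional by Lemma \ref{dimp}. Therefore $\ez_{p}$ has finite-dimensional kernel and cokernel, i.e. it is Fredholm.
\end{proof}
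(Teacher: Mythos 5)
Your proof is correct and follows essentially the same route as the paper: both identify $\Ker\ez_{p}$ and $\mathrm{coker}\,\ez_{p}$ with $H^{0}(X,\I_{p}^{E})$ and a subspace of $H^{1}(X,\I_{p}^{E})$ via the long exact cohomology sequence of $0 \to \I_{p}^{E} \to \O(E) \to \O(E)/\I_{p}^{E} \to 0$, and then invoke Lemma \ref{dimp}. No gaps.
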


\begin{proof}
If in the exact sequence
\[
\begin{CD}
H^{0}(X,\I_{p}^{E}) @>>> H^{0}(X,E) @>\az>> H^{0}(X,E/\I_{p}^{E}) @>>>H^{1}(X,\I_{p}^{E}) \text{,}
\end{CD}
\]
the space $H^{0}(X,E/\I_{p}^{E})$ is identified with $E_{p}$ (and $H^{0}(X,E)$ with $\Gz(X,E)$), then $\az$ becomes $\ez_{p}$. Since the spaces at the two extremes are finite dimensional by Lemma \ref{dimp}, $\ez_{p}$ is indeed Fredholm.
\end{proof}

\begin{lem}
\label{mlem2}
Let $X$ be a compact complex manifold, and $E \to X$ a holomorphic Banach bundle.
\begin{enumerate}[(a)]
\item If $\ez_{p}$ is Fredholm for some $p \in X$, then there are a neighborhood $U \st X$ of $p$ and a finite codimensional subspace $A \st \Gz(X,E)$ such that $\ez_{x}|A$ is injective and $\ez_{x} A \st E_{x}$ is closed for $x \in U$. \\
\item Let $A \st \Gz(X,E)$ be a closed subspace, and $U \st X$ open. If $\ez_{x}|A$ is injective and $\ez_{x} A \st E_{x}$ is closed and finite codimensional for $x \in U$, then $\ez$ defines an isomorphism between the bundle $U \times A \to U$ and a (necessarily trivial) subbundle $T \st E|U$ of finite corank.
\end{enumerate}
\end{lem}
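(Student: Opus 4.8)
### Proof proposal for Lemma \ref{mlem2}

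The plan is to treat the two parts separately, since (a) is an open‑mapping / perturbation statement about a single Fredholm operator, while (b) is a bundle‑theoretic assertion that a fiberwise‑injective evaluation map with closed, finite‑corank image glues to a trivialization.

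For part (a), I would start from the hypothesis that $\ez_p : \Gz(X,E) \to E_p$ is Fredholm, so $N = \Ker \ez_p$ is finite dimensional and $\ez_p \Gz(X,E) \st E_p$ is closed of finite codimension. Pick a closed complement $A_0 \st \Gz(X,E)$ to $N$; then $\ez_p|A_0$ is injective with closed, finite‑codimensional range. The point is that all of this is stable under small perturbation of $p$: the map $(x, s) \mapsto \ez_x(s)$ is holomorphic (hence continuous), and local trivializations $\fz_i : E|U_i \to U_i \times B$ turn $\ez_x|A_0$ into a norm‑continuous family of operators $A_0 \to B$ whose value at $x=p$ is injective with closed image of finite codimension, i.e. left‑invertible modulo a finite‑dimensional cokernel. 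Since the left‑invertible (more precisely, ``injective with closed complemented range'') operators form an open set in the operator norm topology — one can write down an explicit left inverse for nearby operators using a Neumann series — there is a neighborhood $U \st U_i$ of $p$ so that $\ez_x|A_0$ remains injective with closed, finite‑corank range for all $x \in U$. Taking $A = A_0$ and shrinking $U$ finishes (a); note the corank is locally constant near $p$ by the usual semicontinuity, but we only need it finite.

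For part (b), fix the closed subspace $A \st \Gz(X,E)$ and the open set $U$ on which $\ez_x|A$ is injective with closed, finite‑corank image. The map $\ez$ restricted to $U \times A$ is a holomorphic bundle homomorphism $U \times A \to E|U$ that is injective on each fiber with closed image; I want to show its image $T \st E|U$ is a holomorphic subbundle and that $\ez : U \times A \to T$ is an isomorphism of bundles. This is a local question, so work in a trivialization $\fz_i : E|U_i \to U_i \times B$ over a coordinate patch, under which $\ez$ becomes a holomorphic family $x \mapsto \theta(x) \in \Hom(A,B)$, each $\theta(x)$ injective with closed, finite‑corank range. Fixing $x_0 \in U$ and a finite‑dimensional complement $C$ to $\theta(x_0)A$ in $B$, the map $A \oplus C \to B$, $(s,c) \mapsto \theta(x)s + c$, is an isomorphism at $x_0$, hence — again by openness of the invertibles in $\text{GL}(B)$ and holomorphic dependence — an isomorphism for $x$ near $x_0$, with holomorphically varying inverse. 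This simultaneously shows that $\theta(x)A$ is a holomorphically varying complemented subspace of $B$ (so $T$ is a holomorphic subbundle, locally trivial with fiber modeled on $A$) and that the inverse of $\ez : U \times A \to T$ is holomorphic. Patching these local trivializations over $U$ shows $T$ is a subbundle isomorphic to $U \times A$; since $A$ is a fixed Banach space, $T$ is trivial, and its corank equals the (locally constant, finite) corank of $\ez_x A$ in $E_x$.

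The main obstacle is the bundle‑theoretic bookkeeping in part (b): one must check not merely that each fiber $\ez_x A$ is a closed subspace of $E_x$, but that these fibers fit together into a \emph{holomorphic} subbundle, i.e. that the complement $C$ can be chosen to make $x \mapsto (\theta(x)|_A)^{-1}$ onto $\theta(x)A$ holomorphic. The device of enlarging $\theta(x)$ to the square invertible operator $A \oplus C \to B$ handles this cleanly, reducing everything to the openness of $\text{GL}(B) \st \Hom(A\oplus C, B)$ and holomorphic dependence of the inverse, which is standard. Part (a) is comparatively routine once one recognizes it as the statement that ``injective with closed, finite‑corank range'' is an open condition in the operator norm topology.
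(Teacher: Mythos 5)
Your proposal is correct and follows essentially the same route as the paper: part (a) by passing to a closed complement of the finite-dimensional kernel and using stability of the ``injective with closed range'' property under small operator-norm perturbations of $\ez_x|A$ in a local trivialization, and part (b) by augmenting $\ez_x|A$ with a finite-dimensional complement of $\ez_{x_0}A$ to obtain an invertible operator $A\oplus\C^m\to B$ and invoking openness of invertibility with holomorphic inverse (which the paper phrases as the implicit function theorem followed by extension of the inverse by linearity).
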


\begin{proof}
(a) Let $A$ be a complementary subspace to $\Ker \ez_{p}$. Let $\fz : E|_{V} \to V \times B$ be a trivialization, $p \in V \st X$. Denote by $\tilde{\fz} : E|_{V} \to B$ the composition of $\fz$ with the projection $ V \times B \to B$. Given $x \in V$, let $\tilde{\ez}_{x}$ be the homomorphism
\begin{align}
\label{ep}
\tilde{\ez}_{x} : A \ni s \mapsto \tilde{\fz}s(x) \in B \text{.} \nonumber
\end{align}
By the open mapping theorem, there is a constant $c > 0$ so that $\norm{s}_{A} \le c \norm{\tilde{\ez}_{p}s}_{B}$ for any $s \in A$. Then
\[ \norm{s}_{A} \le c \norm{\tilde{\ez}_{x}s + (\tilde{\ez}_{p} - \tilde{\ez}_{x})s}_{B} \le c \norm{\tilde{\ez_{x}}s}_{B} + \frac{\norm{s}_{A}}{2} \text{,}\]
if $x$ is sufficiently close to $p$, hence the claim. \\

(b) We will prove this without the assumption that $X$ is compact. The advantage is then that, the statement being local, we can assume $X = U \st \C^{n}$ is open and $E = X \times B \to X$ is trivial. We will think of elements of $\Gz(X,E)$ as $B$-valued holomorphic functions on $X$. Our hypothesis still implies that $A$ is a Banach subspace of $\Gz(X,E)$, in fact one isomorphic to a subspace of $B$. Choose $h_{1}, \ldots, h_{m} \in \Gz(X,E)$ so that for a fixed $p \in X$, the $h_{i}(p)$ span a complementary subspace to $\ez_{p}A \st B$. Define a homomorphism $\phi$ of the trivial bundles $F = X \times (A \oplus \C^{m}) \to X$ and $E \to X$ by
\[\phi (x, s, \bz) = \left(x,s(x) + \sum_{i=1}^{m}\bz_{i}h_{i}(x)\right) \in X \times B \text{,}\]
where $x \in X$, $s \in A$, and $\bz=(\bz_{1}, \ldots, \bz_{m}) \in \C^{m}$. The differential of $\phi$ at $(p,0)$,
\[d \phi_{(p,0)} (\xi, s, \bz) = \left(\xi , s(p) + \sum_{i=1}^{m}\bz_{i}h_{i}(p)\right) \]
is an isomorphism between $\C^{n} \times (A \oplus \C^{m})$ and $\C^{n} \times B$ by the choice of $h_{i}$. By the implicit function theorem therefore $\phi$ is invertible on a neighborhood $V \times N \st X \times (A \oplus \C^{r})$ of $(p,0)$. In fact, the inverse can be extended to all of $V \times B$ by linearity. The upshot is that $\phi$ is an isomorphism of the bundles $F|V$ and $E|V$. Since the restriction of $\phi$ and $\ez$ to $V \times (A \oplus (0))$ agree, this implies the claim.
\end{proof}

\begin{proof}[Proof of Proposition \ref{mprop1}]
By Lemma \ref{mlem1} and Lemma \ref{mlem2} (a), for each $p \in X$, there are a neighborhood $U_{p} \st X$ of $p$ and a finite codimensional subspace $A_{p} \st \Gz(X,E)$ such that $\ez_{x}|A_{p}$ is injective for $x \in U_{p}$. Lemma \ref{mlem1} also implies $\codim \ez_{x} A_{p} < \infty$. Let $\{ U_{p} \text{ } | \text{ } p \in P\}$ be a finite cover of $X$, and  $A = \bigcap_{p \in P}A_{p}$. Then $\ez_{x} | A$ is injective for all $x \in X$. Moreover $\ez_{x} A \st E_{x}$ is closed and finite codimensional. By Lemma \ref{mlem2} (b), $T= \ez A \st E$ is a trivial subbundle of finite corank, which proves the Proposition.
\end{proof}


\end{document}